\newcommand\+[1]{\boldsymbol{#1}}
\newcommand\tl[1]{\boldsymbol{\tilde{#1}}}
\newcommand\bbR{\mathbb{R}}
\newcommand\bbN{\mathbb{N}}
\newcommand\pd[2]{\dfrac{\partial {#1}}{\partial {#2}}}
\newcommand\odd[1]{\dfrac{\mathrm{d}}{\mathrm{d} {#1}}}
\newcommand\od[2]{\dfrac{\mathrm{d} {#1}}{\mathrm{d} {#2}}}
\newcommand\renew[1]{{#1}}
\newcommand\ang[1]{\left\langle{#1}\right\rangle}
\begin{document}

\title{Linear Moment Models to Approximate Knudsen Layers}

\author{Ruo Li\thanks{CAPT, LMAM \& School of Mathematical Sciences,
    Peking University, Beijing 100871, China, email: {\tt
      rli@math.pku.edu.cn}.} \and Yichen Yang\thanks{School of
    Mathematical Sciences, Peking University, Beijing 100871, China, email:
    {\tt yichenyang@pku.edu.cn}.}
}

\maketitle
\begin{abstract}
	We propose a well-posed Maxwell-type boundary condition for the
	linear moment system in half-space. As a reduction model of the
	Boltzmann equation, the moment equations are available to model
	Knudsen layers near a solid wall, where proper boundary conditions
	play a key role. In this paper, we will collect the
	moment system into the form of a general boundary value problem in 
	half-space. Utilizing an orthogonal decomposition, we separate 
	the part with a damping term from the system and then impose a 
	new class of Maxwell-type boundary conditions on it. Due to the
	block structure of boundary conditions, we show 
	that the half-space boundary value
	problem admits a unique solution with explicit expressions. 
	Instantly, the well-posedness of the linear moment system is 
	achieved. We apply the procedure to classical flow problems with 
	the Shakhov collision term, such as the velocity slip and 
	temperature jump problems. The model can capture Knudsen
	layers with very high accuracy using only a few moments.

\end{abstract}
\begin{keywords}
	Knudsen layer; half-space moment system; Maxwell-type boundary
	condition; well-posedness
\end{keywords}
\begin{AMS}
	34B40; 34B05; 35Q35; 76P05; 82B40
\end{AMS}

%%% Local Variables:
%%% mode: latex
%%% TeX-master: "article"
%%% End:

\section{Introduction}

The Knudsen layer is an important rarefaction effect of gas flows
near the surface \cite{Lilley2007}, where non-Maxwellian velocity
distribution functions must be considered because of the gas-surface
interaction. The gas exhibits non-Newtonian
behavior in the Knudsen layer, and 
there is a finite velocity or temperature gap at the  
surface, known as the velocity slip or temperature
jump \cite{Kramers1949,Welander1954}. A better understanding of the
Knudsen layer may help design numerical methods for coupling the 
Boltzmann and Euler equations \cite{Chen2019,Gu2019}, 
avoiding solving the complex multidimensional Boltzmann equation
in the whole space.

The linearized Boltzmann equation (LBE) \cite{Williams2001} is 
widely used to depict the Knudsen layer. 
\renew{Half-space problems for the Boltzmann equation are}
often solved by the direct simulation Monte Carlo (DSMC)
method \cite{Bird} or discrete velocity/ordinates method (DVM/DOM) 
\cite{Broadwell,Siewert2001,Bern2010}. Numerical results 
for various collision models
have been reported  \cite{Loyalka1989,Loyalka1990, Siewert2001}. 
In theory, the well-posedness has been exhaustively studied
for linear half-space kinetic equations \cite{Bardos2006,Lu2017,Li2017}
and discrete Boltzmann equations in the DVM \cite{Bern2010}.

Meanwhile, Grad's moment method \cite{Grad1949} has been developed
\cite{Torrilhon2009,Fan_new,framework}
into a popular reduction model of the Boltzmann equation
with efficient numerical methods \cite{Cai2011,Rana2013,Hu2019}.
It is also available \cite{2008Linear,Gu2014,Lijun2017}
to model the Knudsen layer. Compared with kinetic equations,
the moment system often gives a formal analytical general solution
and leads to empirical formulas describing the gas behavior in the 
Knudsen layer. These formulas may help simplify the coupling of 
the Knudsen layer and bulk solutions. However, the 
Maxwell boundary condition proposed by Grad
\cite{Grad1949} is shown unstable
\cite{Sarna2018} even in the linearized case. For the linear 
initial-boundary value problem (IBVP) of moment equations,
\cite{Sarna2018} has defined stability criteria and constructed
the formulation of stable boundary conditions.

To our best knowledge, the well-posedness results are still scattered
for \renew{half-space problems} based on Grad's arbitrary order 
moment equations
with Maxwell-type boundary conditions. In numeric, there are also
few universal methods to deal with different flow problems. 
For example, \cite{Lijun2017} 
numerically solves Kramers' problem for the BGK \cite{BGK} model
and proves the well-posedness when the accommodation coefficient is an 
algebraic number. This paper aims to overcome these two lacks.

One of our main contributions is to propose a new class of 
Maxwell-type boundary conditions. 
It makes sure the well-posedness of the linear 
homogeneous moment system in half-space. The system
is derived from Grad's moment equations under some Knudsen layer
assumptions and can deal with different specific flow problems
with various collision terms. 
We first collect the system into a general half-space 
boundary value problem. Then we make an orthogonal decomposition
to separate the equations with a damping term from the whole system.
With the method of characteristics, we get several  
well-posedness criteria about boundary conditions. Under these
criteria, the solvability of the moment system is ensured. 
From the constructive proof, we can even write explicit analytical
solutions to the moment system. The procedure gives
an efficient algorithm to solve half-space problems,
which is another contribution of this paper.

Specifically, the construction of boundary conditions
mainly follows Grad's idea \cite{Grad1949,Cai2011} by imposing 
the continuity of odd fluxes \cite{1997Arnold} at the boundary. 
The obtained Maxwell-type boundary conditions are in a common 
even-odd parity form \cite{2008Linear,Lijun2017}. To meet 
the well-posedness criteria for half-space problems, the linear
space determined by Maxwell-type boundary conditions
is encouraged to contain the null space of the boundary matrix. 
This idea has been emphasized in many other problems
\cite{1960Local,Rauch1985,Sarna2018,Yong2019}. 
We will first get redundant boundary conditions and then
combine them linearly to meet the above criteria.

This paper is organized as follows. In Section 2, we summarize
the main well-posedness result of linear half-space
boundary value problems. In Section 3, we derive the moment
system in half-space with Maxwell-type boundary conditions.
In Section 4, we apply our model to velocity slip and 
temperature jump problems with the Shakhov collision term. 
The paper ends with a conclusion.

\section{Solvability Conditions for Half-Space Problems}

We consider the boundary layer problem arising in rarefied gas
flows when the Knudsen number tends to zero. These equations are
often linear \cite{Aoki2017} regardless of the nonlinear setting
of the original equations. Meanwhile, the resulting half-space 
moment system may have block structures due to the
orthogonality and the recursion relation of Hermite polynomials.

Therefore, we consider the linear half-space boundary 
value problem with constant coefficients
\begin{eqnarray}\label{eq:KL_0}
  \+A\od{\+w(y)}{y} &=&-\+Q\+w(y),\quad y\in[0,+\infty), \\
  \notag \+w(+\infty) &=& \+0,
\end{eqnarray}
where $\+w(y)\in\bbR^{m+n}$ with $m\geq n$, and
\begin{equation}\label{eq:bs}
  \+A=\begin{bmatrix} \+0 & \+M \\
    \+M^T & \+0\end{bmatrix}, \qquad
  \+Q=\begin{bmatrix}\+Q_e & \+0 \\
    \+0 & \+Q_o\end{bmatrix}.
\end{equation}
We assume that the matrix $\+M \in \bbR^{m\times n}$ has a full
column rank of $n$, and the matrices $\+Q_e\in\bbR^{m\times m}$ as
well as $\+Q_o\in\bbR^{n\times n}$ are symmetric positive 
semi-definite. We also let 
$\mathrm{Null}(\+A)\cap \mathrm{Null}(\+Q)=\{\+0\}$.

Our main point is to prescribe appropriate boundary conditions 
at $y=0$ to ensure the well-posedness of (\ref{eq:KL_0}).
The result is not as straightforward as it appears since the matrices
$\+A$ and $\+Q$ may both have zero eigenvalues 
\footnote{For equations like (\ref{eq:KL_0}), 
there are some classical results if $\+Q$ is symmetric
  positive definite \cite{1960Local} or $\+A$ does not have zero
  eigenvalues \cite{Hil2013}. Here we consider the situation
  without such restrictions. Incidentally, the well-posedness
  for kinetic layer equations is well solved
  \cite{Bardos2006}.}.

\renew{If $\+Q$ is invertible, we can consider the eigenvalue
decomposition of $\+Q^{-1}\+A$ and only prescribe boundary 
conditions for some ``incoming waves'' as in the hyperbolic 
problem. While both the coefficient matrices have zero
eigenvalues, we should consider the generalized eigenvalue 
problem of $\+A$ and $\+Q$. We try to reduce the generalized
eigenvalue problem by simultaneous reduction. Roughly 
speaking, we may split the space into several parts and only need
to prescribe boundary conditions for the projection of $\+w$ on
some spaces.

We first consider the null space of $\+Q$. We 
let $\+G\in\bbR^{(m+n)\times p}$ be the orthonormal basis 
matrix of $\mathrm{Null}(\+Q)$, i.e., $\+Q\+G=\+0$ and $\+G$ is column 
orthogonal. It's not enough to consider $\+Q$ and its
orthogonal complement. In fact, multiply (\ref{eq:KL_0}) left
by $\+G^T$ and we find 
$\+G^T\+A\+w(y)=0$ because $\+Q$ is symmetric and
$\+w(+\infty)=\+0.$ So the projection of $\+w$ on 
$\mathrm{span}\{\+A\+G\}$ does not need extra boundary
conditions. We need to consider the intersection of
$\mathrm{span}\{\+G\}$ and the orthogonal complement of
$\mathrm{span}\{\+A\+G\}$. 
Denote $\+V_1=\+G\+X\in\bbR^{(m+n)\times r}$
where $\+X\in\bbR^{p\times r}$ is the orthonormal basis
matrix of $\mathrm{Null}(\+G^T\+A\+G),$ i.e.,
$\+G^T\+A\+G\+X=\+0$ and $\+X$ is column orthogonal.
Then $\mathrm{span}\{\+V_1\}$ is
the intersection mentioned above.

To be rigorous, we let $\+U_1=\+G$,
$\+U_2=\+A\+G\+X$, and introduce the following 
lemma, whose proof is put in Appendix \ref{app:D}.
}

\begin{lemma}\label{lem:01}
  There exist matrices $\+V_2 \in \bbR^{(m+n)\times p}$ and
  $\+V_3 \in \bbR^{(m+n)\times (m+n-p-r)}$ such that  
  \begin{enumerate}
  \item $\mathrm{span}\{\+V_2\} =\mathrm{span}\{\+A\+G\}$;
	\item  $\+V=[\+V_1,\+V_2,\+V_3]$ is orthogonal and 
  $\+U=[\+U_1,\+U_2,\+V_3]$ is invertible;
  \item $\mathrm{rank}(\+U_2^T\+A\+V_1)=r$, $\+V_3^T\+A\+V_1 =\+0$
    and $\+V_3^T\+Q\+V_3$ symmetric positive definite.
  \end{enumerate}
\end{lemma}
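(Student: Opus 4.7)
The plan is to construct $\+V_2$ and $\+V_3$ by an orthogonal completion and then verify each required property, exploiting two structural observations: $\+A$ is symmetric (so $\+G^T\+A\+G$ is symmetric), and $\mathrm{Null}(\+A)\cap\mathrm{Null}(\+Q)=\{\+0\}$. The latter immediately yields that $\+A\+G$ has full column rank $p$: if $\+A\+G\+y=\+0$ then $\+G\+y\in\mathrm{Null}(\+A)\cap\mathrm{Null}(\+Q)=\{\+0\}$. The former, together with $\+G^T\+A\+G\+X=\+0$, gives $\+V_1^T\+A\+G=\+X^T\+G^T\+A\+G=\+0$, so $\mathrm{span}\{\+V_1\}\perp\mathrm{span}\{\+A\+G\}$.

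Accordingly I would take $\+V_2$ to be any orthonormal basis of $\mathrm{span}\{\+A\+G\}$ (obtained by QR on $\+A\+G$), and let $\+V_3$ be an orthonormal basis of the orthogonal complement of $\mathrm{span}\{\+V_1,\+V_2\}$ in $\bbR^{m+n}$. Property 1 and orthogonality of $\+V$ are then built in. The identity $\+V_3^T\+A\+V_1=\+0$ follows because $\+V_3\perp\mathrm{span}\{\+A\+G\}\supseteq\mathrm{span}\{\+A\+V_1\}$. The rank claim follows from $\+U_2^T\+A\+V_1=(\+A\+V_1)^T(\+A\+V_1)$: $\+A\+V_1$ is injective on $\mathrm{span}\{\+V_1\}$ by the same null-space argument, so this Gram matrix is $r\times r$ of full rank $r$.

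A key auxiliary observation, needed twice below, is $\mathrm{span}\{\+V_3\}\cap\mathrm{span}\{\+G\}=\{\+0\}$: any $\+v=\+G\+a\in\mathrm{span}\{\+V_3\}$ is perpendicular to $\+A\+G$, so $\+G^T\+A\+G\+a=\+0$ and hence $\+a=\+X\+b$, giving $\+v=\+V_1\+b$; orthogonality to $\+V_1$ then forces $\+b=\+0$. Positive definiteness of $\+V_3^T\+Q\+V_3$ is then immediate: for $\+z\neq\+0$, $\+V_3\+z$ is nonzero and cannot lie in $\mathrm{Null}(\+Q)=\mathrm{span}\{\+G\}$, so $\+z^T\+V_3^T\+Q\+V_3\+z>0$.

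The main obstacle is the invertibility of $\+U$, since $\+G$ and $\+V_3$ need not be mutually orthogonal. I would verify it by studying $\+U^T\+U$; the identities $\+G^T\+A\+G\+X=\+0$ and $\+V_3^T\+A\+G=\+0$ annihilate most off-diagonal blocks, and after swapping the middle and last row-column blocks $\+U^T\+U$ becomes block-diagonal with blocks $(\+A\+V_1)^T(\+A\+V_1)$ and $\begin{bmatrix}\+I&\+G^T\+V_3\\ \+V_3^T\+G&\+I\end{bmatrix}$. The first is positive definite by the rank argument above. For the second, Schur complement reduces positive definiteness to that of $\+V_3^T(\+I-\+G\+G^T)\+V_3$, the Gram matrix of the projections of the columns of $\+V_3$ onto $\mathrm{span}\{\+G\}^\perp$; this is positive definite precisely by the intersection fact $\mathrm{span}\{\+V_3\}\cap\mathrm{span}\{\+G\}=\{\+0\}$ established above.
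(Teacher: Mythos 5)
Your construction and verification coincide with the paper's own proof: $\+V_2$ is obtained by orthonormalizing $\+A\+G$ (whose full column rank follows from $\mathrm{Null}(\+A)\cap\mathrm{Null}(\+Q)=\{\+0\}$), $\+V_3$ is the orthogonal complement of $[\+V_1,\+V_2]$, and the pivotal fact $\mathrm{span}\{\+V_3\}\cap\mathrm{span}\{\+G\}=\{\+0\}$ is established by exactly the same argument and then used in the same two places. The only (cosmetic) difference is that you certify the invertibility of $\+U$ explicitly through $\+U^T\+U$ and a Schur complement, whereas the paper deduces it directly from the trivial intersection; both hinge on the same identities, so the proposal is correct and essentially identical to the paper's proof.
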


\renew{The matrix $\+V_3$ satisfying Lemma \ref{lem:01} is not
unique, which can differ by an orthogonal transformation. However,
our proof would not rely on the choices of $\+V_3$ in this paper.}
As mentioned 
before, we may only prescribe additional boundary conditions at
$y=0$ for $\+V_3^T\+w(0)$.
So we assume the boundary condition 
\begin{equation}\label{eq:KL_wb}
\+B\+V_3 (\+V_3^T\+w(0)) 
	= \+g,\
\end{equation}
where the constant matrix $\+B\in\bbR^{n\times (m+n)}$ and 
the vector $\+g\in\bbR^n.$

We then denote by $\+Q_{33}=\+V_3^T\+Q\+V_3$ and
$\+A_{33}=\+V_3^T\+A\+V_3$. Let $n_+$ be the number of positive
eigenvalues of $\+Q_{33}^{-1}\+A_{33}$. We may show below that
there exists $\+T_+\in\bbR^{(m+n-p-r)\times n_+}$ and a positive
diagonal matrix $\+\Lambda_+$ such that
\begin{equation}\label{eq:T2}
  \+Q_{33}^{-1}\+A_{33}\+T_+=\+T_+\+\Lambda_+.
\end{equation}
One has the following well-posedness theorem.

\begin{theorem}\label{thm:01}
  The system \eqref{eq:KL_0} with the boundary condition
  \eqref{eq:KL_wb} has a unique solution of $\+w(y)$ if 
  the constant matrix $\+B$ and the vector $\+g$ in \eqref{eq:KL_wb}
	satisfy that 
  \begin{eqnarray}\label{eq:pf02}
    \mathrm{rank}(\+B\+V_3\+T_+) = n_+, \quad
    \+g\in\mathrm{span}\{\+B\+V_3\+T_+\},
  \end{eqnarray}
	\renew{where $\+V_3$ satisfies Lemma \ref{lem:01} and
		$\+T_+$ satisfies \eqref{eq:T2}.}
	What's more, the analytical solutions to the system are
	explicitly given by the expressions \eqref{eq:V1T}, \eqref{eq:V2T},
	and \eqref{eq:V3T} below.
\end{theorem}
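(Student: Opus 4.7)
The plan is to exploit the orthogonal decomposition from Lemma~\ref{lem:01} to split the solution into three components, associated with $\+V_1$, $\+V_2$, $\+V_3$, whose sub-equations decouple. I would write $\+w = \+V_1\+\alpha_1 + \+V_2\+\alpha_2 + \+V_3\+\alpha_3$ and left-multiply \eqref{eq:KL_0} by $\+U^T = [\+G,\,\+A\+G\+X,\,\+V_3]^T$; the three resulting block rows can be solved in turn, and \eqref{eq:pf02} then provides exactly the constraints needed for uniqueness.

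The first row, obtained by testing against $\+G^T$, gives $\+G^T\+A\+w' = \+0$ because $\+Q\+G = \+0$. Thus $\+G^T\+A\+w$ is constant in $y$, and the decay $\+w(+\infty)=\+0$ forces $\+G^T\+A\+w \equiv \+0$. By construction of $\+X$ we have $\+G^T\+A\+V_1 = \+0$, and since $\mathrm{span}\{\+V_2\}=\mathrm{span}\{\+A\+G\}$ while $\+V_3$ is orthogonal to $\+V_2$, also $\+G^T\+A\+V_3 = \+0$. The identity therefore reduces to $\+G^T\+A\+V_2\,\+\alpha_2 = \+0$; and because $\+A\+G$ has full column rank $p$ (using $\mathrm{Null}(\+A)\cap\mathrm{Null}(\+Q)=\{\+0\}$), the block $\+G^T\+A\+V_2 = (\+A\+G)^T\+V_2$ is $p\times p$ invertible, forcing $\+\alpha_2 \equiv \+0$.

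The row tested against $\+V_3^T$ then collapses, via $\+V_3^T\+A\+V_1=\+0$ and $\+Q\+V_1=\+0$, to the linear ODE $\+A_{33}\+\alpha_3' = -\+Q_{33}\+\alpha_3$. Since $\+Q_{33}$ is symmetric positive definite and $\+A_{33}$ is symmetric, the generalized eigenvalue problem for the pencil $(\+A_{33},\+Q_{33})$ admits a real diagonalization; modes with nonpositive eigenvalues must vanish by the decay at infinity, leaving $\+\alpha_3(y) = \+T_+\exp(-y\,\+\Lambda_+^{-1})\+c$ with $\+c \in \bbR^{n_+}$ free. Imposing $\+B\+V_3\,\+\alpha_3(0) = \+g$ yields $\+B\+V_3\+T_+\+c = \+g$, and the two assumptions in \eqref{eq:pf02} are precisely what is needed to determine $\+c$ uniquely.

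Finally, the row tested against $\+U_2^T$ reduces, using $\+\alpha_2=\+0$ and $\+Q\+V_1=\+0$, to an ODE for $\+\alpha_1$ whose leading coefficient is $\+U_2^T\+A\+V_1 = \+U_2^T\+U_2$, invertible by Lemma~\ref{lem:01}(3); the right-hand side is a known function of the already-determined $\+\alpha_3$ and decays exponentially, so integration from $y$ to $+\infty$ together with $\+\alpha_1(+\infty)=\+0$ (forced by $\+w(+\infty)=\+0$ and the full column rank of $\+V_1$) gives an explicit closed-form expression for $\+\alpha_1$, completing the construction. I expect the main obstacle to be the bookkeeping in this decoupling step: correctly identifying which off-diagonal blocks vanish from the orthogonality of $\+V$ combined with the identities in Lemma~\ref{lem:01}, and in particular verifying that $\+G^T\+A\+V_2$ and $\+U_2^T\+A\+V_1$ are the two invertible blocks that make the cascade uniquely solvable. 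Once that structure is nailed down, everything else is a standard exponential-solution argument for linear ODE systems.
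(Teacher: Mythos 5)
Your proposal is correct and follows essentially the same route as the paper: left-multiplying by $\+U^T=[\+G,\+A\+G\+X,\+V_3]^T$, using the decay at infinity to force $\+V_2^T\+w\equiv\+0$, solving the decoupled damped system for $\+V_3^T\+w$ via the symmetric pencil $(\+A_{33},\+Q_{33})$ with condition \eqref{eq:pf02} fixing the free constants, and recovering $\+V_1^T\+w$ by integrating the $\+U_2^T$-block using the invertibility of $\+U_2^T\+A\+V_1=\+U_2^T\+U_2$. The only cosmetic difference is the order in which the $\+V_1$ and $\+V_3$ components are resolved.
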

\begin{proof}
  The proof is constructive. It is clear that \eqref{eq:KL_0} is
  equivalent to
  \begin{equation*}
    \+U^T\+A\+V\od{(\+V^T\+w)}{y}
    = - \+U^T\+Q\+V (\+V^T\+w),\quad
    \+w(+\infty)=\+0.
  \end{equation*}
  Since $\+G^T\+Q=\+0$ and $\+Q\+V_1=\+Q\+G\+X=\+0$, the above system
  becomes
  \begin{eqnarray*}
    \begin{bmatrix} * & * & * \\
      \+A_{21} & * & * \\
      \+A_{31} & * & \+A_{33}
    \end{bmatrix}\odd{y}
    \begin{bmatrix} \+V_1^T\+w\\
      \+V_2^T\+w \\ \+V_3^T\+w \end{bmatrix} = -
    \begin{bmatrix} \+0 & \+0 & \+0 \\ 
      \+0 & * & * \\
      \+0 & * & \+Q_{33}
    \end{bmatrix} \begin{bmatrix} \+V_1^T\+w\\
      \+V_2^T\+w \\ \+V_3^T\+w \end{bmatrix},\quad
    \+w(+\infty)=\+0,
  \end{eqnarray*}
  where $\+A_{21}=\+U_2^T\+A\+V_1$, $\+A_{31}=\+V_3^T\+A\+V_1$,
  $\+A_{33}=\+V_3^T\+A\+V_3$, $\+Q_{33}=\+V_3^T\+Q\+V_3$.

  With the condition $\+w(+\infty)=\+0,$ the first $p$ equations 
	would give
  \begin{equation}\label{eq:V2T}
    \+G^T\+A\+w(y) = \+0
    \quad\Rightarrow\quad \+V_2^T\+w(y)=\+0.
  \end{equation}
  By Lemma \ref{lem:01}, we have $\mathrm{rank}(\+A_{21})=r$,
  $\+A_{31}=\+0$ and $\+Q_{33}>0.$ So we can directly solve
  $\+V_1^T\+w$ from the next $r$ equations, if $\+V_3^T\+w(y)$ is
  known, to be
  \begin{eqnarray}\label{eq:V1T}
    \+V_1^T\+w(y) = - \+A_{21}^{-1}
    \+U_2^T\+A\+V_3(\+V_3^T \+w(y)) 
    + \int_{y}^{+\infty}\!\!
    \+A_{21}^{-1}\+U_2^T\+Q\+V_3(\+V_3^T \+w(s))
    \, \mathrm{d}s. 
  \end{eqnarray}
  Noting that $\+A_{31}=\+0,$ the last $m+n-p-r$ equations
  are separated alone as 
  \begin{equation*}
    \+A_{33}\odd{y}(\+V_3^T\+w) = -\+Q_{33}
    (\+V_3^T\+w),\ \+V_3^T\+w(+\infty)=\+0.
  \end{equation*}

	Since $\+Q_{33}>0,$ we apply the Cholesky decomposition 
	to have $\+Q_{33}=\+L\+L^T$. Then
	for the symmetric matrix $\+A_{33}$,
	due to Sylvester's law of inertia and the symmetry,
	there exists an orthogonal diagonalization
  $\+L^{-1} \+A_{33} \+L^{-T} \+R_+ = \+R_+\+\Lambda_+$ with
  $\+R_+^T\+R_+ = \+I_{n_+}$ where $\+\Lambda_+$ is a positive
	diagonal matrix. One possible choice of $\+T_+$ is 
  \begin{equation}\label{eq:T}
    \+T_+=\+L^{-T}\+R_+,
  \end{equation}
  then we have $\+Q_{33}^{-1}\+A_{33}\+T_+=\+T_+\+\Lambda_+.$
  
  Since $\+V_3^T\+w(+\infty)=0$, characteristic variables
  corresponding to non-positive eigenvalues of
  $\+Q_{33}^{-1} \+A_{33}$ have to be zero. Therefore, there exists
  $\+z\in\bbR^{n_+}$ such that $\+V_3^T\+w=\+T_+\+z$. Under the
  condition (\ref{eq:KL_wb})(\ref{eq:pf02}), the linear algebraic
  system
  \[ \+B\+V_3 (\+T_+\+z(0)) = \+g \]
  has a unique solution of $\+z(0)$. Consequently, we can uniquely
  solve 
  \begin{equation}\label{eq:V3T}
    \+V_3^T\+w(y) = \+T_+\exp\left(-\+\Lambda_+^{-1}y\right)\+z(0).
  \end{equation}

  Until now, we show that the unique solution of $\+w$ is determined
  by explicit expressions (\ref{eq:V1T}), (\ref{eq:V2T}), and
  (\ref{eq:V3T}).
\end{proof}

\renew{
	Theorem \ref{thm:01} has clarified the conditions to ensure the
	existence of a unique solution for the system (\ref{eq:KL_0}).
	Due to the explicit expressions (\ref{eq:V3T}), we can see that
	the solution $\+w$ continuously relies on $\+z(0)$, where $\+z(0)$
	is solved by a linear algebraic system with the boundary data
	$\+g$. Since all matrices have constant coefficients, the 
	solution must change continuously with the boundary data and
	Theorem \ref{thm:01} ensures the well-posedness of the 
	system (\ref{eq:KL_0}).
}

With the block structure in (\ref{eq:bs}), we can say more about the
value of $n_+$. The key point lies in the following lemma.
\begin{lemma}\label{lem:41}
Let $\+D \in \bbR^{\alpha \times \beta}$ and
$\mathrm{rank}(\+D)=\gamma$. Then $\begin{bmatrix} \+0 & \+D \\ \+D^T & \+0
\end{bmatrix}$ has $\alpha + \beta - 2\gamma$ zero eigenvalues,
$\gamma$ positive eigenvalues and $\gamma$ negative eigenvalues.
\end{lemma}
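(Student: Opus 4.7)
The plan is to reduce the block matrix to a canonical form via the singular value decomposition of $\+D$, after which the spectrum can be read off directly.

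First I would write $\+D = \+U \+\Sigma \+V^T$, where $\+U \in \bbR^{\alpha\times \alpha}$ and $\+V \in \bbR^{\beta \times \beta}$ are orthogonal and $\+\Sigma \in \bbR^{\alpha\times\beta}$ is the singular-value matrix containing the $\gamma$ positive singular values $\sigma_1,\dots,\sigma_\gamma$ on its leading diagonal and zeros elsewhere. Then the orthogonal matrix $\+P = \mathrm{diag}(\+U,\+V)$ conjugates the block matrix into
\begin{equation*}
  \+P^T\begin{bmatrix} \+0 & \+D \\ \+D^T & \+0\end{bmatrix}\+P
  = \begin{bmatrix} \+0 & \+\Sigma \\ \+\Sigma^T & \+0\end{bmatrix},
\end{equation*}
so it suffices to count the eigenvalues of the right-hand side, which has the same spectrum with multiplicities.

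Next I would exhibit eigenvectors of this reduced matrix directly. For each $i=1,\dots,\gamma$, taking the $i$-th columns of $\+U$ and $\+V$ in the reduced coordinates gives the pairs $(\+e_i^{(\alpha)},\pm \+e_i^{(\beta)})^T$, which are eigenvectors with eigenvalues $\pm \sigma_i$; this yields $\gamma$ positive and $\gamma$ negative eigenvalues. For the remaining coordinates, vectors of the form $(\+e_j^{(\alpha)},\+0)^T$ with $j>\gamma$ and $(\+0,\+e_k^{(\beta)})^T$ with $k>\gamma$ are mapped to zero, contributing $(\alpha-\gamma)+(\beta-\gamma) = \alpha+\beta-2\gamma$ linearly independent eigenvectors with eigenvalue zero. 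Together these produce $\alpha+\beta$ independent eigenvectors, which exhausts the space and confirms the multiplicities.

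There is no real obstacle here; the argument is essentially bookkeeping once the SVD is introduced. The only point that requires slight care is handling the rectangular shape of $\+\Sigma$ (so that the counts $\alpha-\gamma$ and $\beta-\gamma$ of ``unpaired'' basis vectors on each side are tracked correctly), but this is entirely routine and gives exactly the $\alpha+\beta-2\gamma$ zero eigenvalues claimed in the statement.
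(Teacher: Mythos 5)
Your proof is correct, but it takes a genuinely different route from the paper's. You diagonalize explicitly: conjugating by $\mathrm{diag}(\+U,\+V)$ from the SVD $\+D=\+U\+\Sigma\+V^T$ reduces the block matrix to $\begin{bmatrix} \+0 & \+\Sigma \\ \+\Sigma^T & \+0\end{bmatrix}$, and you then exhibit a full eigenbasis, showing the nonzero eigenvalues are exactly $\pm\sigma_1,\dots,\pm\sigma_\gamma$. The paper instead avoids the SVD entirely: it observes that if $(\+x_e,\+x_o)$ is an eigenvector for $\lambda$ then $(\+x_e,-\+x_o)$ is an eigenvector for $-\lambda$, so positive and negative eigenvalues pair up, and then counts zero eigenvalues from $\mathrm{rank}(\+D)=\mathrm{rank}(\+D^T)=\gamma$. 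The paper's argument is shorter and more elementary, and its sign-flip symmetry is reused later (in the proof of Theorem 3.2 the authors exploit precisely this pairing to write $\+R_+=\begin{bmatrix}\+R_e\\ \+R_o\end{bmatrix}$ with $\+R_e^T\+R_e=\+R_o^T\+R_o=\tfrac12\+I_{n_+}$), so it carries extra structural information. Your SVD approach buys something too: it identifies the eigenvalues concretely as the singular values of $\+D$ and produces the eigenvectors explicitly, which would be the natural starting point if one wanted quantitative spectral information rather than just the inertia. Both proofs are complete; the only bookkeeping point in yours, the rectangular shape of $\+\Sigma$, is handled correctly.
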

\begin{proof}
	The symmetric matrix $\tl{D}:=\begin{bmatrix} \+0 & \+D \\
	\+D^T & \+0 \end{bmatrix}$ must have $\alpha+\beta$ real
	eigenvalues. Assume $\lambda\in\bbR$ is an eigenvalue, 
	then there exists $\+x_e\in\bbR^{\alpha}$ and 
	$\+x_o\in\bbR^{\beta}$ such that 
	\[\begin{bmatrix} \+0 & \+D \\ \+D^T & \+0 \end{bmatrix}
		\begin{bmatrix} \+x_e \\ \+x_o \end{bmatrix} = \lambda
	\begin{bmatrix} \+x_e \\ \+x_o \end{bmatrix} \quad\Rightarrow
		\quad \begin{bmatrix} \+0 & \+D \\ \+D^T & \+0
	   	\end{bmatrix} \begin{bmatrix} \+x_e \\ -\+x_o \end{bmatrix}
	   	= -\lambda \begin{bmatrix} \+x_e \\ -\+x_o \end{bmatrix}.\]
		So $-\lambda$ is also an eigenvalue, which implies that 
		$\tl{D}$ has the same number of positive and negative 
		eigenvalues. Since $\mathrm{rank}(\+D)
			=\mathrm{rank}(\+D^T)=\gamma$, there must be
			$\alpha+\beta-2\gamma$ zero eigenvalues.
%\hfill
\end{proof}

{By some direct but lengthy manipulations, we can write
all matrices in Theorem \ref{thm:01} as a block form.}
Let $p_1,p_2\in\bbN$ that $p_1=\mathrm{dim}(\mathrm{Null}
(\+Q_e))$ and $p_2=\mathrm{dim}(\mathrm{Null}(\+Q_o))$. Then it's direct to
verify that $p=p_1+p_2$ and the matrices $\+G$ as well as $\+X$ can
be chosen as
\begin{equation}\label{eq:G2}
    \+G = \begin{bmatrix} \+G_e & \+0 \\ \+0 & \+G_o \end{bmatrix}, \qquad 
    \+X = \begin{bmatrix} \+X_e & \+0 \\ \+0 & \+X_o \end{bmatrix},
\end{equation}
where $\+G_e\in\bbR^{m\times p_1}$, $\+G_o\in\bbR^{n\times p_2}$,
$\+X_e\in\bbR^{p_1\times r_1}$, and $\+X_o\in\bbR^{p_2\times r_2}$.
Here we let $c=\mathrm{rank}(\+G_e^T\+M\+G_o)$ and $r_1=p_1-c$, 
$r_2=p_2-c$. By Lemma \ref{lem:41}, we have $r_1+r_2=r$ and
consequently 
	\[r_1+p_2=(p_1-c)+p_2=(p_2-c)+p_1=r_2+p_1=(r+p)/2.\]
	Denote by $\+Y_1=\+G_e\+X_e$ and $\+Z_1=\+G_o\+X_o.$
According to the proof of Lemma \ref{lem:01}, we can construct
the matrices $\+V_2$ and $\+V_3$ as
\begin{equation}\label{eq:V3}
    \+V_2=\begin{bmatrix} \+Y_2 & \+0 \\ \+0 & \+Z_2 \end{bmatrix},\quad
    \+V_3=\begin{bmatrix} \+Y_3 & \+0 \\ \+0 & \+Z_3 \end{bmatrix},
\end{equation}
where $\+Y_2\in\bbR^{m\times p_2}$ and $\+Z_2\in\bbR^{n\times p_1}$
that $\mathrm{span}\{\+Y_2\}=\mathrm{span}\{\+M\+G_o\}$,
    $\mathrm{span}\{\+Z_2\}=\mathrm{span}\{\+M^T\+G_e\}$.
$\+Y_3\in\bbR^{m\times (m-r_1-p_2)}$ and 
	$\+Z_3\in\bbR^{n\times (n-r_2-p_1)}$ are chosen to let
	$[\+Y_1,\+Y_2,\+Y_3]$ and $[\+Z_1,\+Z_2,\+Z_3]$ both be
	orthogonal. With these structures in mind, we note the 
	following lemma:

\begin{lemma}\label{lem:010}
    The matrix $\+Y_3^T\+M\+Z_3$ is of full column rank.
\end{lemma}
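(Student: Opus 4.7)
The plan is to prove injectivity of $\+Y_3^T\+M\+Z_3$ directly: assume $\+Y_3^T\+M\+Z_3\+u=\+0$ for some $\+u$, set $\+v:=\+Z_3\+u$, and chase the consequences until forced to conclude $\+v=\+0$; since $\+Z_3$ has orthonormal columns this gives $\+u=\+0$. The structural ingredients I would keep in mind are that $[\+Y_1,\+Y_2,\+Y_3]$ and $[\+Z_1,\+Z_2,\+Z_3]$ are orthogonal bases of $\bbR^m$ and $\bbR^n$ respectively, that $\mathrm{span}\{\+Y_2\}=\mathrm{span}\{\+M\+G_o\}$ and $\mathrm{span}\{\+Z_2\}=\mathrm{span}\{\+M^T\+G_e\}$, that with $\+C:=\+G_e^T\+M\+G_o$ the columns of $\+X_e$ span $\mathrm{Null}(\+C^T)$ and the columns of $\+X_o$ span $\mathrm{Null}(\+C)$, and that $\+M$ has full column rank.

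First I would translate the hypothesis $\+Y_3^T\+M\+v=\+0$ into a decomposition of $\+M\+v$: by orthogonality of the basis of $\bbR^m$, $\+M\+v$ must lie in $\mathrm{span}\{\+Y_1\}\oplus\mathrm{span}\{\+Y_2\}$, so I can write $\+M\+v=\+G_e\+X_e\+a+\+M\+G_o\+c$ for some coefficients $\+a,\+c$. The next step is to use the dual orthogonality on the right side, namely $\+v\perp\+Z_2$, which gives $\+G_e^T\+M\+v=\+0$. Applying $\+G_e^T$ to the decomposition and using $\+G_e^T\+G_e=\+I$ yields the compact identity $\+X_e\+a+\+C\+c=\+0$.

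The main obstacle is extracting $\+a=\+0$ from this identity; the trick I would use is that $\+X_e$ is a basis of $\mathrm{Null}(\+C^T)$, so multiplying on the left by $\+C^T$ annihilates the first term and produces $\+C^T\+C\+c=\+0$, whence $\+C\+c=\+0$ by taking the norm and then $\+X_e\+a=\+0$, i.e.\ $\+a=\+0$. After that, the finish is routine: $\+M\+v=\+M\+G_o\+c$ together with the full column rank of $\+M$ forces $\+v=\+G_o\+c$, while $\+C\+c=\+0$ means $\+c\in\mathrm{span}\{\+X_o\}$, so $\+v\in\mathrm{span}\{\+G_o\+X_o\}=\mathrm{span}\{\+Z_1\}$; but $\+v\in\mathrm{span}\{\+Z_3\}$ is orthogonal to $\+Z_1$, giving $\+v=\+0$ and hence $\+u=\+0$.
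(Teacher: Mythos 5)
Your proof is correct and follows the same basic strategy as the paper's: expand $\+M\+Z_3\+u$ in the orthogonal basis $[\+Y_1,\+Y_2,\+Y_3]$, kill the $\+Y_1$-component using $\mathrm{span}\{\+Z_2\}=\mathrm{span}\{\+M^T\+G_e\}$ together with $\+Z_3^T\+Z_2=\+0$, and then use the full column rank of $\+M$ to pull the conclusion back to $\bbR^n$. The two arguments part ways only at the final step. The paper stops at $\+Z_3\+u=\+G_o\+c$ and invokes the transversality $\mathrm{span}\{\+Z_3\}\cap\mathrm{span}\{\+G_o\}=\{\+0\}$ inherited from (\ref{eq:v3g}) in the proof of Lemma \ref{lem:01}; you instead extract the extra identity $\+C\+c=\+0$ from $\+C^T\+C\+c=\+0$, which places $\+c$ in $\mathrm{Null}(\+C)=\mathrm{span}\{\+X_o\}$ and hence $\+Z_3\+u$ in $\mathrm{span}\{\+Z_1\}$, so that plain orthogonality $\+Z_3^T\+Z_1=\+0$ finishes the job. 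Your route is slightly longer but more self-contained, relying only on the construction of the bases rather than on the intersection property proved for Lemma \ref{lem:01}; the one ingredient you should justify explicitly is that $\+X_o$ spans all of $\mathrm{Null}(\+C)$ (not merely a subspace), which follows from the dimension count $\dim\mathrm{Null}(\+C)=p_2-c=r_2$ behind the block choice (\ref{eq:G2}).
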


The proof of this lemma is put in Appendix \ref{app:D}.
With the help of this lemma,
\begin{corollary}\label{thm:02}
  The value of $n_+$ is $n_+=n-(p+r)/2$.
\end{corollary}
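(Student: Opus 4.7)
The plan is to reduce counting positive eigenvalues of $\+Q_{33}^{-1}\+A_{33}$ to counting positive eigenvalues of the symmetric $\+A_{33}$, and then read off that count from the block form of $\+A_{33}$ using Lemmas \ref{lem:41} and \ref{lem:010}.

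First I would reuse the Cholesky trick from the proof of Theorem \ref{thm:01}: write $\+Q_{33}=\+L\+L^T$, so that $\+Q_{33}^{-1}\+A_{33}=\+L^{-T}(\+L^{-1}\+A_{33}\+L^{-T})\+L^T$ is similar to the symmetric matrix $\+L^{-1}\+A_{33}\+L^{-T}$. Sylvester's law of inertia then identifies the number of positive eigenvalues of $\+L^{-1}\+A_{33}\+L^{-T}$ with that of $\+A_{33}$, so $n_+$ equals the number of positive eigenvalues of $\+A_{33}$.

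Next I would exploit the block form. By \eqref{eq:bs} and \eqref{eq:V3},
\[
\+A_{33}=\+V_3^T\+A\+V_3=\begin{bmatrix}\+0 & \+Y_3^T\+M\+Z_3\\ \+Z_3^T\+M^T\+Y_3 & \+0\end{bmatrix},
\]
which is exactly the shape covered by Lemma \ref{lem:41}. Lemma \ref{lem:010} says $\+Y_3^T\+M\+Z_3\in\bbR^{(m-r_1-p_2)\times (n-r_2-p_1)}$ has full column rank, so its rank is $n-r_2-p_1$. Using $r_1=p_1-c$, $r_2=p_2-c$, $r=r_1+r_2$ and $p=p_1+p_2$ (all established right above the statement), this rank simplifies to $n-(r+p)/2$.

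Finally, by Lemma \ref{lem:41} the number of positive eigenvalues of $\+A_{33}$ coincides with this rank, giving $n_+=n-(p+r)/2$. The only conceptually nontrivial step is the first one, because $\+Q_{33}^{-1}\+A_{33}$ is generally not symmetric; everything after is a direct application of the lemmas that have already been prepared.
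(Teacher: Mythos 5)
Your proposal is correct and follows essentially the same route as the paper: the paper's proof of Corollary \ref{thm:02} likewise reads off the block form of $\+A_{33}$ from \eqref{eq:V3}, invokes Lemma \ref{lem:010} for the full column rank of $\+Y_3^T\+M\+Z_3$, and applies Lemma \ref{lem:41}, with the reduction from $\+Q_{33}^{-1}\+A_{33}$ to the inertia of $\+A_{33}$ already supplied by the Cholesky/Sylvester argument in the proof of Theorem \ref{thm:01}. You merely make that implicit reduction explicit, which is a fine (and arguably clearer) presentation of the same argument.
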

\begin{proof}
  According to (\ref{eq:V3}), we have
  $\+A_{33}=\begin{bmatrix} \+0 & \+Y_3^T\+M\+Z_3 \\ \+Z_3^T\+M^T\+Y_3
    & \+0 \end{bmatrix}$.  Since $\+Y_3^T\+M\+Z_3$ is of full column
  rank, the matrix $\+A_{33}$ should have $n-(p+r)/2$
  positive eigenvalues by Lemma \ref{lem:41}. 
\end{proof}

%%% Local Variables:
%%% mode: latex
%%% TeX-master: "article"
%%% End:

\section{Half-Space Moment System}
\subsection{Knudsen Layer Equations.}\label{sec:31}
We focus on Grad's arbitrary order moment equations \cite{Grad1949}
which are about moments of the velocity distribution function. 
Besides low-order moments such as the density, the macroscopic 
velocity, the temperature, etc., Grad tested the Boltzmann 
equation by multidimensional Hermite 
polynomials to get equations of higher-order moments.
For single-species monatomic gas without external forces,
{the linear homogeneous half-space problem for Grad's moment
equations}
(cf. Appendix \ref{app:F}) can write as 
\begin{eqnarray}\label{eq:KL_pre}
	\+A_2\od{\+h(y)}{y} &=& -\+J\+h(y),\quad y\in[0,+\infty),\\
	\quad \+h(+\infty)&=&\+0,\notag
\end{eqnarray}
where $\+h=\+h(y)\in\bbR^N$ and $N=\#\mathbb{I}_M$ with 
$\mathbb{I}_M=\{\+\alpha\in\bbN^D:\ |\+\alpha|\leq M\}.$
Here $\+\alpha=(\alpha_1,...,\alpha_D)\in\bbN^D$ is a multi-index
with $|\+\alpha|=\sum \alpha_i$, and $M$ is often called
the moment order.

For economy of words, we use $\+h[\+\alpha]$ to represent its
$\mathcal{N}(\+\alpha)$-th component, analogously for other 
vectors and matrices, where the one to one mapping
$\mathcal{N}:\ \mathbb{I}_M\rightarrow\{1,2,...,N\}$ is defined
as follows.

\begin{defn}\label{def:order}
For $\+\alpha,\+\beta\in\mathbb{I}_M$, 
\begin{enumerate}
	\item If $\alpha_2$ is even and $\beta_2$ is odd, then 
		$\mathcal{N}(\+\alpha)<\mathcal{N}(\+\beta)$.
	\item If $\alpha_2$ and $\beta_2$ have the same parity, but
		$|\+\alpha|<|\+\beta|$, then $\mathcal{N}(\+\alpha)<
		\mathcal{N}(\+\beta)$.
	\item If $\alpha_2$ and $\beta_2$ have the same parity and
		$|\+\alpha|=|\+\beta|$, but there exists a smallest 
		$1\leq i\leq D$ such that $\alpha_i\neq\beta_i$. Then
		$\mathcal{N}(\+\alpha)<\mathcal{N}(\+\beta)$ when
		$\alpha_i>\beta_i$. Otherwise $\mathcal{N}(\+\alpha)>
		\mathcal{N}(\+\beta)$.
	\end{enumerate}
\end{defn}
\renew{In the indexing provided by Defintion 3.1, the indices with even 
second components are always ordered before the ones with odd second
components, e.g. $(a_1,0,a_3)$ is before $(b_1,1,b_3)$. Then the 
indices are sorted by the multi-index norm and finally by the
anti-lexicographic order. For example, indices from
$\{\+\alpha\in\bbN^3,\ |\+\alpha|\leq 2\}$ are sorted as
$(0,0,0)$,$(1,0,0)$,$(0,0,1)$,$(2,0,0)$,$(1,0,1)$,$(0,2,0)$,
$(0,0,2)$, $(0,1,0)$,$(1,1,0)$,$(0,1,1).$
}

Now the coefficient matrices in (\ref{eq:KL_pre}) have
the following explicit expressions:
\begin{gather} \label{eq:LHME_A}
	\+A_2[\+\alpha,\+\beta] = \ang{\xi_2\omega \phi_{\+\alpha}
	\phi_{\+\beta}} = \sqrt{\alpha_2}\delta_{\+\beta,\+\alpha-\+e_2}
		+\sqrt{\alpha_2+1}\delta_{\+\beta,\+\alpha+\+e_2},\\
\label{eq:Q} \renew{\+J[\+\alpha,\+\beta] = -
	\ang{\mathcal{L}[\omega\phi_{\+\beta}]\phi_{\+\alpha}}},
\end{gather}
where $\delta_{\+\alpha,\+\beta}$ equals one when $\+\alpha=\+\beta$
and otherwise zero. Here $\+e_2\in\bbN^D$ is a unit vector with
only the second component being one.

\begin{defn}\label{def:Hermite}
In the above expressions, we denote the integral on the whole velocity
space by
	\[\ang{\cdot}=\displaystyle\int_{\bbR^D}\!\!
\cdot\,\mathrm{d}\+\xi.\]
	The isotropic weight function $\omega$ and the orthonormal
	Hermite polynomial $\phi_{\+\alpha}$ are defined as 
\begin{eqnarray*}
	\omega = \omega(\+\xi) &=& {(2\pi)^{-D/2}}\exp\left(
	-{|\boldsymbol{\xi}|^2}/{2}\right),\\
	\phi_{\+\alpha} = \phi_{\+\alpha}(\+\xi) &=&
	\frac{(-1)^{|\boldsymbol{\alpha}|}}{\sqrt{\+\alpha !}}
	\dfrac{\partial^{|\boldsymbol{\alpha}|} 
		\omega}{\partial \boldsymbol{\xi}^{\boldsymbol{\alpha}}}
		\omega^{-1},
\end{eqnarray*}
	where $\+\xi=(\xi_1,...,\xi_D)\in\bbR^D,\ \+\xi^{\boldsymbol{\alpha}}
	=\prod\xi_i^{\alpha_i},\ \+\alpha!=\prod \alpha_i!$. 
	According to \cite{Fan_new,Grad1949N}, we have
	\begin{itemize}
	\item Recursion relation. 
		\[\xi_d\phi_{\+\alpha} = 
		\sqrt{\alpha_d}\phi_{\+\alpha-\+e_d}+
		\sqrt{\alpha_d+1}\phi_{\+\alpha+\+e_d},\quad d=1,2,...,D.\]
	\item Orthogonal relation.
		\[ \ang{\omega\phi_{\+\alpha}\phi_{\+\beta}}=\delta_{\+\alpha,
			\+\beta}.\]
	\end{itemize}
	The linearized Boltzmann operator $\mathcal{L}[f](\+\xi)$ is 
	defined as
\[ \mathcal{L}[f](\+\xi) = \int_{\bbR^D}\!\!
	\int_{\mathbb{S}^{D-1}}\!\!
\mathcal{K}[f/\omega]
\omega(\+\xi)\omega(\+\xi_1)B(|\+\xi-\+\xi_1|,\Theta)
\,\mathrm{d}\Theta\mathrm{d}\+\xi_1,\]
where $B(|\+\xi-\+\xi_1|,\Theta)$ is a collision kernel.
The operator $\mathcal{K}$ is defined as 
\[ \mathcal{K}[g](\+\xi,\+\xi_1,\Theta)=g(\+\xi')+g(\+\xi_1')-
g(\+\xi)-g(\+\xi_1),\]
where the post-collisional velocities $\+\xi'$ and $\+\xi_1'$ are 
determined by $\+\xi,\ \+\xi_1$ as well as $\Theta$.
\end{defn}

Calculations of $\+J$ can refer to \cite{Wang2019}. We also consider
the (linearized) Shakhov model \cite{Shakhov}, which aims to approximate
the original (linearized) Boltzmann equation. It's defined by setting
$\+J=\+J_{Sh}$ in (\ref{eq:KL_pre}), where nonzero entries of $\+J_{Sh}$
(cf. \cite{Cai2011}) are
\begin{eqnarray}\label{eq:Q_Sh}
\+J_{Sh}[2\+e_i,2\+e_j] &=& \delta_{ij}-\frac{1}{D},\ i,j=1,2,...,D;\\
\+J_{Sh}[\+e_i+2\+e_j,\+e_i+2\+e_k] &=& \delta_{jk}-  \notag
\frac{1-\Pr}{5}\sqrt{1+2\delta_{ij}}\sqrt{1+2\delta_{ik}},\
i,j,k=1,2,...,D;\\ \notag
\+J_{Sh}[\+\alpha,\+\alpha] &=& 1,\ |\+\alpha|\geq 2 \text{\ and\ }
\+\alpha\neq 2\+e_i,\ \+e_i+2\+e_j.
\end{eqnarray}
When the Prandtl number $\Pr=1,\ $ the Shakhov model reduces to the
celebrated BGK model. 

Considering the computational efficiency, (\ref{eq:KL_pre}) is
unsatisfactory \renew{to solve half-space problems} since 
the size of (\ref{eq:KL_pre}) reaches
\[
N={\#}\{\+\alpha\in\bbN^D:\ |\+\alpha|\leq M\} = \binom{M+D}{D}
= \frac{(M+D)\cdots(M+1)}{D!} = O(M^D),
\]
but very few quantities are cared about in physics.
We can formally write the density $\rho_K$, the macroscopic velocity
$u_{i,K}$, the temperature $\theta_K$ and the stress tensor
$\sigma_{ij,K}$ as 
\begin{equation}\label{eq:LHME_h}
\rho_K =\+h[\+0],\ u_{i,K} =\+h[\+e_i],\ \frac{\sigma_{ij,K}
+\delta_{ij}\theta_K}{\sqrt{1+\delta_{ij}}} = 
\+h[\+e_i+\+e_j],
\end{equation}
where we assume $\sum \sigma_{ii,K}=0$ and the subscript $K$ is
attached to discriminate the Knudsen layer solutions.
\renew{Then, for example, in Kramers' problem, we consider
the tangential velocity $u_{1,K}$, where a gas flow passes by an 
infinite plate.  We assume the gas velocity is parallel 
to the plate, and the only driven force is from the tangential stress.
Due to the symmetry, Kramers' problem is essentially a
one-dimensional problem rather than the $D$-dimensional problem.
For the BGK model, the moment system (\ref{eq:KL_pre}) for
Kramers' problem \cite{Lijun2017} can reduce to
\begin{gather*}
	\od{\sigma_{12,K}}{y} = 0,\\
	\od{u_{1,K}}{y} + \sqrt{2}\od{\+h[\+e_1+2\+e_2]}{y}
	= -\sigma_{12,K},\\
	\sqrt{2}\od{\sigma_{12,K}}{y} + \sqrt{3}
	\od{\+h[\+e_1+3\+e_2]}{y} = -\+h[\+e_1+2\+e_2],\\
	\cdots\\
	\sqrt{k}\od{\+h[\+e_1+(k-1)\+e_2]}{y} + 
	(1-\delta_{k,M-1})\sqrt{k+1}
	\od{\+h[\+e_1+(k+1)\+e_2]}{y} = -\+h[\+e_1+k\+e_2],
	\quad k\leq M-1.
\end{gather*}
When $M=4$, the small moment system is
\begin{gather*}
	\od{\sigma_{12,K}}{y} = 0,\\
	\od{u_{1,K}}{y} + \sqrt{2}\od{\+h[\+e_1+2\+e_2]}{y}
	= -\sigma_{12,K},\\
	\sqrt{2}\od{\sigma_{12,K}}{y} + \sqrt{3}
	\od{\+h[\+e_1+3\+e_2]}{y} = -\+h[\+e_1+2\+e_2],\\
	\sqrt{3}\od{\+h[\+e_1+2\+e_2]}{y} = -\+h[\+e_1+3\+e_2].
\end{gather*}	
}

This inspires us to reduce (\ref{eq:KL_pre}) for specific
flow problems. 
\renew{We introduce a projection matrix 
$\+P_{\mathbb{I}}\in\bbR^{N\times(m+n)}$ relying on an index
set $\mathbb{I}\subset\mathbb{I}_M$}, and the reduced system would be
\begin{equation}\label{eq:KL_p2}
	\+A\od{\+w}{y} = -\+Q\+w,\ \+w(+\infty)=\+0;
	\quad 
	\+w=\+P_{\mathbb{I}}^T\+h,\ \+A=\+P_{\mathbb{I}}^T\+A_2
	\+P_{\mathbb{I}},\ \+Q=\+P_{\mathbb{I}}^T\+J\+P_{\mathbb{I}}. 
\end{equation}
\renew{Now (\ref{eq:KL_p2}) consists of $m+n$ equations, and we 
expect to reduce the system without compromising the accuracy 
of the concerned physical quantities.
Assume $\+P_{\mathbb{I}}$ is column orthogonal. We suggest 
choosing $\+P_{\mathbb{I}}$ such that it spans an
invariant space of $\+A_2$ and $\+J$, i.e.,
$\+A_2\+P_{\mathbb{I}}=\+P_{\mathbb{I}}\+C_1,\ \+J\+P_{\mathbb{I}}
=\+P_{\mathbb{I}}\+C_2$ for some matrices $\+C_1,\+C_2.$ If so,
the system (\ref{eq:KL_p2}) can be obtained by multiplying 
(\ref{eq:KL_pre}) left by $\+P_{\mathbb{I}}^T.$}

In this paper, we let $\+P_{\mathbb{I}}$ be the selection 
matrix with $\+P_{\mathbb{I}}[\+\alpha,\+\beta]=
\delta_{\+\alpha,\+\beta},$ where $\+\alpha\in\mathbb{I}_M$ and
$\+\beta\in\mathbb{I}.$ Thus (\ref{eq:KL_p2}) can be viewed as 
selecting some row equations of (\ref{eq:KL_pre}) and then 
dropping out extra unknowns to close the system.
Here $\+P_{\mathbb{I}}[\+\alpha,\+\beta]$ means the entry in the 
$\mathcal{N}(\+\alpha)$-th row and $\mathcal{N}_1(\+\beta)$-th
column where $\mathcal{N}$ is defined in Definition \ref{def:order}
and $\mathcal{N}_1:\ \mathbb{I}\rightarrow\{1,2,...,m+n\}$ gives
an indexing of $\mathbb{I}$. We define $\mathcal{N}_1$ by retaining
the order in $\mathbb{I}_M$, i.e.,
\[ \mathcal{N}_1(\+\alpha) < \mathcal{N}_1(\+\beta) 
\ \Leftrightarrow\ \mathcal{N}(\+\alpha) < \mathcal{N}(\+\beta),
\quad \+\alpha,\+\beta\in\mathbb{I}\subset\mathbb{I}_M.\]
Hence, if we assume  
\begin{eqnarray*}
	\mathbb{I}_e = \{\+\alpha\in\mathbb{I},\ \alpha_2 \text{\ even}\},
	\quad \mathbb{I}_o = \{\+\alpha\in\mathbb{I},\ \alpha_2
	\text{\ odd}\},\quad m=\#\mathbb{I}_e,\ n=\#\mathbb{I}_o,
\end{eqnarray*}
then $\+\alpha\in\mathbb{I}_e$ is always ordered in front of
$\+\beta\in\mathbb{I}_o$.

Due to the special sparsity pattern (\ref{eq:LHME_A}) of $\+A_2$,
\renew{it can be checked that $\+P_{\mathbb{I}}$ with the following
choice of $\mathbb{I}$ spans an invariant space of $\+A_2$}:
\begin{eqnarray*}
	{\bf{(C1).}}\quad \text{\ If\ } \+\gamma=(\gamma_i)\in\mathbb{I},
	\text{\ then\ } \+\gamma-\gamma_2\+e_2+\alpha_2\+e_2
	\in\mathbb{I},\ 0\leq \alpha_2\leq M-|\+\gamma|+\gamma_2.
\end{eqnarray*}
\renew{Intuitively, this condition says that all indices differing
only by the second component should either belong to or not belong
to $\mathbb{I}$. For example, if $(1,0,0)\in\mathbb{I}$, then 
$(1,1,0)$,$(1,2,0)$,...,$(1,M-1,0)$ should all belong to $\mathbb{I}$.}
Under this choice, every $\+\alpha\in\mathbb{I}_o$ implies
$\+\alpha-\+e_2\in\mathbb{I}_e$, which leads to $m\geq n$.
Thus, from (\ref{eq:LHME_A}), we immediately know that 
$\+A$ has the block structure
	\begin{equation}\label{eq:MM}
		\+A=\begin{bmatrix} \+0 & \+M \\ \+M^T & \+0\end{bmatrix},
			\quad 
		\+M[\+\alpha,\+\beta] 
		= \ang{\xi_2\omega
		\phi_{\+\beta}\phi_{\+\alpha}}
		,\quad \+\alpha\in\mathbb{I}_e,\ \+\beta\in\mathbb{I}_o.
	\end{equation}
As proved in Appendix \ref{app:D}, the equations (\ref{eq:KL_p2})
in fact satisfy the following lemma:
\begin{lemma}\label{lem:31}
	Assume $\mathbb{I}$ satisfies {\bf (C1)} and the multi-indices
	are ordered by Definition \ref{def:order}. Then, Knudsen layer 
	equations \eqref{eq:KL_p2} satisfy conditions in \eqref{eq:KL_0},
	i.e., 
	\begin{enumerate}
		\item The matrix $\+A$ has a block structure as \eqref{eq:MM},
			where $\mathrm{rank}(\+M)=n.$
		\item The matrix $\+Q$ has a block structure as \eqref{eq:bs},
			and is symmetric positive semi-definite.
		\item $\mathrm{Null}(\+A)\cap \mathrm{Null}(\+Q)=\{\+0\}$.
	\end{enumerate}
	The lemma also holds for the Shakhov model, i.e. $\+J=\+J_{Sh}$ in
	\eqref{eq:KL_p2}.
\end{lemma}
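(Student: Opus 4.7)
The plan is to verify the three items in order, relying on the recursion and orthogonality of Hermite polynomials (Definition \ref{def:Hermite}), the parity invariance of the linearized collision operator, and the explicit Shakhov entries in \eqref{eq:Q_Sh}. The Shakhov statement then follows by specializing the same reasoning.

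For item 1, I would expand $\+A_2[\+\alpha,\+\beta]$ using the recursion in Definition \ref{def:Hermite}: $\xi_2\phi_{\+\alpha}$ is a combination of $\phi_{\+\alpha\pm\+e_2}$, so $\+A_2[\+\alpha,\+\beta]$ vanishes unless $\alpha_2$ and $\beta_2$ have opposite parity. Because Definition \ref{def:order} puts all even-$\alpha_2$ indices before all odd-$\alpha_2$ ones, this immediately yields the block form with $\+M$ as in \eqref{eq:MM}. For $\mathrm{rank}(\+M)=n$, I would exploit (C1): every $\+\beta\in\mathbb{I}_o$ has $\+\beta-\+e_2\in\mathbb{I}_e$. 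Grouping indices by $\+\gamma:=\+\beta-\beta_2\+e_2$ splits $\+M$ into independent bidiagonal sub-blocks whose subdiagonal entries $\sqrt{\beta_2}$ are nonzero, so each sub-block is injective and $\+M$ has full column rank.

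For item 2, the block structure of $\+Q$ follows from invariance of the linearized Boltzmann operator under the reflection $R$ sending $\xi_2$ to $-\xi_2$: the collision kernel depends only on $|\+\xi-\+\xi_1|$ and the deflection angle, so $\mathcal{L}[f\circ R]=\mathcal{L}[f]\circ R$; combined with $\phi_{\+\alpha}\circ R=(-1)^{\alpha_2}\phi_{\+\alpha}$, this forces the integrand in \eqref{eq:Q} to be $R$-odd whenever $\alpha_2+\beta_2$ is odd, so $\+J[\+\alpha,\+\beta]=0$ in that case. Symmetry of $\+J$ and positive semi-definiteness come from the standard self-adjointness and H-theorem dissipation of $\mathcal{L}$, both inherited by $\+Q=\+P_{\mathbb{I}}^T\+J\+P_{\mathbb{I}}$. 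For Shakhov, these properties are read off \eqref{eq:Q_Sh}: the block $\+I_D-\tfrac{1}{D}\+1\+1^T$ is PSD, each block $\+I_D-\tfrac{1-\Pr}{5}\+v_i\+v_i^T$ with $\|\+v_i\|^2=D+2$ is PSD in the physical $\Pr$ range, and the remaining diagonal entries equal one; the parity block structure is also immediate from the index patterns in \eqref{eq:Q_Sh}.

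For item 3, take $\+w=(\+w_e,\+w_o)\in\mathrm{Null}(\+A)\cap\mathrm{Null}(\+Q)$. Full column rank of $\+M$ forces $\+w_o=\+0$, leaving $\+M^T\+w_e=\+0$ together with $\+Q_e\+w_e=\+0$. I would then catalogue $\mathrm{Null}(\+Q_e)$ restricted to $\mathbb{I}_e$: for the full Boltzmann case it is spanned by the even-$\alpha_2$ parts of the collision invariants $\phi_{\+0}$, $\phi_{\+e_i}$ ($i\neq 2$) and $\sqrt{2}\sum_j\phi_{2\+e_j}+D\phi_{\+0}$, and for Shakhov the same list suffices from the row pattern of \eqref{eq:Q_Sh}. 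To rule out each candidate, I would test it against the relation $\+M^T\+w_e[\+\beta]=\sqrt{\beta_2}\,\+w_e[\+\beta-\+e_2]+\sqrt{\beta_2+1}\,\+w_e[\+\beta+\+e_2]=0$ for carefully chosen $\+\beta\in\mathbb{I}_o$ of the form $\+\alpha+\+e_2$, whose membership in $\mathbb{I}_o$ is guaranteed by (C1). The nonzero ``creation'' entries $\sqrt{\beta_2+1}$ then peel off the coefficients of $\+w_e$ in a staircase fashion, forcing $\+w_e=\+0$.

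The main obstacle is item 3: both $\mathrm{Null}(\+Q_e)$ and the left null space of $\+M$ depend delicately on which indices (C1) forces into $\mathbb{I}$, and the Shakhov null space carries the extra energy-mode direction $\sum_i\phi_{2\+e_i}$ whose image in $\mathbb{I}_e$ must be detected by a suitable $\+\beta$. The technical heart of the argument is the bookkeeping step of enumerating all candidate null vectors that survive the parity restriction to $\mathbb{I}_e$ and verifying that the staircase of creation entries generated by (C1) is rich enough to annihilate each of them.
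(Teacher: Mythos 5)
Your proposal is correct and follows essentially the same route as the paper: the full column rank of $\+M$ via the (C1)-induced staircase/bidiagonal structure, the parity block form and positive semi-definiteness of $\+Q$ from standard properties of the linearized collision operator (and inspection of \eqref{eq:Q_Sh}), and item~3 by reducing to $\+w_o=\+0$ and then killing the surviving collision-invariant directions with rows $\+M^T[\+\beta,\cdot]$. The paper compresses item~3 into ``direct calculations,'' so your explicit enumeration of $\mathrm{Null}(\+Q_e)$ and the detection via $\+\beta=\+\alpha+\+e_2$ is just a fleshed-out version of the same argument.
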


\begin{remark}\label{rem:01}
	From another point of view, \eqref{eq:KL_p2} can be viewed as
	directly testing the kinetic layer equations by linear 
	combinations of Hermite polynomials, and then 
	truncating the collision term to obtain a closed system.
	It's clearly shown in the expressions
	\eqref{eq:LHME_A}\eqref{eq:Q}.
\end{remark}	

\begin{remark}
The rigorous error analysis between \eqref{eq:KL_p2} and kinetic
layer equations is beyond this paper's scope. We notice that if
$\+J$ shares a sparsity pattern, then 
\renew{$\+P_{\mathbb{I}}$ can be chosen to span an invariant space 
	of $\+J$ easily.}	
	For example, if we consider the Shakhov model and focus on
	the tangential velocity $\+h[\+e_1]$, we can choose
\begin{equation*}%\label{eq:IKSH}
	\mathbb{I}=\{\+e_1+2\+e_j+\alpha_2\+e_2,\ j\neq 2,\ 0\leq\alpha_2
\leq M-3\}\cup\{\+e_1+\alpha_2\+e_2,\ 0\leq\alpha_2\leq M-1\}.
\end{equation*}
\renew{When $D=3$, the above $\mathbb{I}$ consists of 
	all indices in the form of $(1,\alpha_2,0)$, $(3,\alpha_2,0)$ 
	and $(1,\alpha_2,2)$. So $\#\mathbb{I}=3M-4$ and} 
the scale is reduced from $O(M^D)$ to $O(M)$.
\renew{It's direct to check that
	$\+J_{Sh}\+P_{\mathbb{I}}=\+P_{\mathbb{I}}\+C$ for some matrix
	$\+C$.}
\end{remark}

\subsection{Maxwell-Type Boundary Conditions.}\label{sec:32}
Maxwell's boundary condition describes the diffuse-specular process 
between the gas and solid wall, i.e., the reflected distribution of
particles is divided into a sum of $\chi$ portion of diffuse reflection
and $(1-\chi)$ portion of specular reflection:
\begin{equation}\label{eq:Maxwell}
	f(\+\xi) = \chi\mathcal{M}^w(\+\xi) + (1-\chi)f(\+\xi^*),\quad
	(\+\xi-\+u^w)\cdot\+n<0,
\end{equation}
where $\chi\in[0,1]$ is the accommodation coefficient. 
The wall is assumed impermeable and 
can not deform. We assume the reflected distribution caused by
the diffuse reflection is the Maxwellian
\begin{equation}
\mathcal{M}^w(\+\xi) = \frac{\rho^w}{(2\pi\theta^w)^{D/2}}
\exp\left(-\frac{|\+\xi-\+u^w|^2}{2\theta^w}\right),\ 
\end{equation}
where $\+u^w$ and $\theta^w$ are given, with $\rho^w$ determined by 
the no mass flow condition $(\+u-\+u^w)\cdot\+n=0.$ Here $\+n$ is the
outward unit normal vector at the boundary and $\+\xi^*=\+\xi-2\+n
(\+\xi\cdot\+n)$. In Knudsen layer problems, 
we further assume $\+n=(0,-1,0,...,0)$ and $\+u^w\cdot\+n=0$.

Following Grad's idea \cite{Grad1949}, we test (\ref{eq:Maxwell})
by polynomials which are odd about the direction normal to the
boundary. Rather than directly choosing \cite{Cai2011,Sarna2018}
the Hermite polynomials, we let the test polynomials be
\begin{equation*}
p_{\boldsymbol{\alpha}}(\boldsymbol{\xi}) 
	= \xi_2\phi_{\+\alpha}(\boldsymbol{\xi}),\
\boldsymbol{\alpha} \in\mathbb{I}_e,
\end{equation*}
which may simplify the analysis. Then under some Knudsen layer 
assumptions, when $\chi$ is not a small quantity, we would 
(cf. Appendix \ref{app:C}) have $m$ boundary conditions in an 
abstract form:
\begin{equation}\label{eq:wbc1}
	\+M(\+w_o(0)+\+f_o) = -b(\chi)\+S(\+w_e(0) + \+f_e),\ 
	\+w=\begin{bmatrix}\+w_e\\ \+w_o\end{bmatrix},\ 
	\+w_e\in\bbR^m,\ \+w_o\in\bbR^n,
\end{equation}
where $b(\chi)=\frac{2\chi}{2-\chi}\left(\sqrt{2\pi}\right)^{-1}$
and $\+f_o\in\bbR^n,\ \+f_e\in\bbR^m$. Here
$\+w$ is the unknown in (\ref{eq:KL_p2}) and $\+M$ is given 
in (\ref{eq:MM}). Entries of $\+S\in\bbR^{m\times m}$ are 
\begin{equation}\label{eq:defS}
	\+S[\+\alpha,\+\beta]= 
	\frac{\sqrt{2\pi}}{2} \ang{|\xi_2|\omega
	\phi_{\+\alpha}\phi_{\+\beta}},
	\quad \+\alpha,\+\beta\in\mathbb{I}_e,
\end{equation}
which are explicitly calculated in Appendix \ref{app:B}. What's more,
Appendix \ref{app:D} will show that 

\begin{lemma}\label{lem:03}
	The matrix $\+S$ defined in \eqref{eq:defS} is symmetric positive
	definite.
\end{lemma}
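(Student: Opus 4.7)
The plan is to handle symmetry trivially and then reduce positive definiteness to an integral-positivity statement that uses the linear independence of Hermite polynomials.

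Symmetry is immediate from the definition: since $\phi_{\+\alpha}\phi_{\+\beta}=\phi_{\+\beta}\phi_{\+\alpha}$ as scalar functions, the integrand in
\[
\+S[\+\alpha,\+\beta] = \frac{\sqrt{2\pi}}{2}\langle|\xi_2|\omega\phi_{\+\alpha}\phi_{\+\beta}\rangle
\]
is symmetric in the two indices, so $\+S = \+S^T$. No further argument is needed here.

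For positive definiteness, I would pick an arbitrary $\+c = (c_{\+\alpha})_{\+\alpha \in \mathbb{I}_e}\in\bbR^m$ and exploit the bilinearity of the integral to rewrite the quadratic form as
\[
\+c^T\+S\+c = \frac{\sqrt{2\pi}}{2}\int_{\bbR^D}|\xi_2|\,\omega(\+\xi)\,g(\+\xi)^2\,\mathrm{d}\+\xi,
\qquad g(\+\xi):=\sum_{\+\alpha\in\mathbb{I}_e} c_{\+\alpha}\phi_{\+\alpha}(\+\xi).
\]
Since $|\xi_2|\,\omega(\+\xi)\geq 0$ and $g^2\geq 0$ pointwise, this is immediately nonnegative. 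To upgrade to strict positivity when $\+c \neq \+0$, I would observe that by the orthogonality relation $\langle\omega\phi_{\+\alpha}\phi_{\+\beta}\rangle = \delta_{\+\alpha,\+\beta}$ recalled in Definition \ref{def:Hermite}, the family $\{\phi_{\+\alpha}\}_{\+\alpha\in\mathbb{I}_e}$ is linearly independent, so $g$ is a nonzero polynomial. Hence $g^2$ is a nonzero polynomial, which vanishes only on a Lebesgue-null set; meanwhile the weight $|\xi_2|\omega(\+\xi)$ is strictly positive on the full-measure open set $\{\+\xi\in\bbR^D:\xi_2\neq 0\}$. Therefore $|\xi_2|\omega g^2$ is strictly positive on a set of positive measure, and the integral is strictly positive.

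There is really no substantive obstacle here; the only point that requires any care is the strictness argument, which I would spell out by the nonvanishing-polynomial observation above rather than appealing to a norm in $L^2(|\xi_2|\omega\,\mathrm{d}\+\xi)$ (which would also work, but would require checking that $|\xi_2|\omega\,\mathrm{d}\+\xi$ is a nondegenerate weight on polynomial span). The cleanest exposition keeps the proof to a few lines, and the restriction $\alpha_2$ even defining $\mathbb{I}_e$ plays no role beyond specifying the index set — linear independence of the Hermite polynomials is unaffected.
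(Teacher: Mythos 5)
Your proof is correct and follows essentially the same route as the paper: both write the quadratic form as $\frac{\sqrt{2\pi}}{2}\langle|\xi_2|\omega f^2\rangle$ for $f=\sum_{\+\alpha\in\mathbb{I}_e}c_{\+\alpha}\phi_{\+\alpha}$ and then use linear independence of the Hermite polynomials to force $\+c=\+0$. Your version merely makes explicit the step the paper leaves implicit — that vanishing of the integral forces $f\equiv 0$ because a nonzero polynomial vanishes only on a null set while the weight is positive almost everywhere.
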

\begin{remark}
Grad's boundary condition can be regarded as
	\begin{equation}\label{eq:obc}
		\+E\+M(\+w_o(0)+\+f_o) = -b(\chi)\+E\+S(\+w_e(0) + \+f_e), 
	\end{equation}
where $\+E=[\+I_n,\+0]$ with $\+E\in\bbR^{n\times m}$. In the
	IBVP for moment equations, this gives a correct number
\cite{Grad1949,Rauch1985} of boundary conditions for the linear
hyperbolic system. But the well-posedness of \eqref{eq:obc} has
	never been proved. In fact, \cite{Sarna2018} shows that
	\eqref{eq:obc} is unstable in the IBVP. A similar problem 
	arises in the half-space problem, where it's difficult to 
	prove the well-posedness of Grad's boundary condition in 
	the general case. 
\end{remark}

By Lemma \ref{lem:31}, the Knudsen layer equations (\ref{eq:KL_p2})
satisfy all conditions in Theorem \ref{thm:01}. Recalling Sylvester's
rank inequality of the product of two matrices and its condition
of the equality, we are inspired to multiply (\ref{eq:wbc1}) left
by $\+M^T\+S^{-1}$ to get $n$ boundary conditions
\begin{equation}\label{eq:wbc2}
	\+M^T\+S^{-1}\+M(\+w_o(0)+\+f_o) = -b(\chi)\+M^T(\+w_e(0)+\+f_e),
\end{equation}
which satisfy the following well-posedness theorem: 
\begin{theorem}\label{thm:32}
  Suppose $\chi\in(0,1]$ and $r_2=0$ in \eqref{eq:G2}.  Then there
  exists a unique $\+G_e^T\+f_e$ such that the system \eqref{eq:KL_p2}
  with Maxwell-type boundary conditions \eqref{eq:wbc2} has a unique
  solution of $\+w(y)$.
\end{theorem}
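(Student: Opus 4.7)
The plan is to apply Theorem~\ref{thm:01} to the system \eqref{eq:KL_p2} with the boundary condition \eqref{eq:wbc2}, by identifying
\[
\+B = [b(\chi)\+M^T,\ \+M^T\+S^{-1}\+M]\in\bbR^{n\times(m+n)},
\qquad
\+g = -b(\chi)\+M^T\+f_e - \+M^T\+S^{-1}\+M\+f_o\in\bbR^n.
\]
Lemma~\ref{lem:31} supplies the structural hypotheses on $\+A$ and $\+Q$, and Corollary~\ref{thm:02} combined with $r_2=0$ yields $n_+ = n-p_1$. It therefore suffices to verify the two conditions in \eqref{eq:pf02}: the rank condition $\mathrm{rank}(\+B\+V_3\+T_+)=n_+$ and the compatibility $\+g\in\mathrm{span}\{\+B\+V_3\+T_+\}$.

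For the rank, the block form \eqref{eq:V3} together with $\+Z_1$ being empty (since $r_2=0$) gives $\+B\+V_3 = [b(\chi)\+M^T\+Y_3,\ \+M^T\+S^{-1}\+M\+Z_3]$. Writing each column of $\+T_+$ as $[\+t_e;\+t_o]$ for a positive eigenvalue $\lambda$ of $\+Q_{33}^{-1}\+A_{33}$, the generalized eigenrelations $\+Y_3^T\+M\+Z_3\,\+t_o = \lambda\,\+Y_3^T\+Q_e\+Y_3\,\+t_e$ and $\+Z_3^T\+M^T\+Y_3\,\+t_e = \lambda\,\+Z_3^T\+Q_o\+Z_3\,\+t_o$ hold. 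The full column rank of $\+Y_3^T\+M\+Z_3$ (Lemma~\ref{lem:010}) and positive definiteness of $\+Z_3^T\+Q_o\+Z_3$ (inherited from $\+Q_{33}$) force $\+t_e\ne\+0$ and $\+t_o\ne\+0$; more strongly, the $(n-p_1)\times(n-p_1)$ bottom block $\+T_{+,o}$ is invertible. Since $\+M$ has full column rank and $\+S$ is positive definite (Lemma~\ref{lem:03}), $\+M^T\+S^{-1}\+M$ is invertible, and premultiplying $\+B\+V_3\+T_+\+c = \+0$ by its inverse reduces the rank question to
\[
b(\chi)(\+M^T\+S^{-1}\+M)^{-1}\+M^T\+Y_3\+T_{+,e}\+c \ +\ \+Z_3\+T_{+,o}\+c \ =\ \+0 \ \Longrightarrow\ \+c=\+0.
\]
I expect to close this by a transversality argument combining the positive definiteness of $\+S$ with the orthogonality relations $\+Z_3^T\+M^T\+G_e = \+0$ and $\+Y_3^T\+M\+G_o = \+0$ built into the construction of $\+V_3$.

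For the compatibility, $\mathrm{span}\{\+B\+V_3\+T_+\}$ has codimension $p_1$ in $\bbR^n$, and the free parameter $\+G_e^T\+f_e\in\bbR^{p_1}$ enters $\+g$ only through the additive term $-b(\chi)\+M^T\+G_e\,(\+G_e^T\+f_e)$. I first verify that $\+M^T\+G_e$ has full column rank $p_1$: if $\+M^T\+G_e\+x = \+0$ then $\+v := \+G_e\+x$ satisfies $\+Q_e\+v = \+0$ and $\+M^T\+v = \+0$, so the lift $[\+v;\+0]\in\bbR^{m+n}$ belongs to $\mathrm{Null}(\+A)\cap\mathrm{Null}(\+Q) = \{\+0\}$, forcing $\+x=\+0$. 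Next I show $\mathrm{span}\{\+M^T\+G_e\}\cap\mathrm{span}\{\+B\+V_3\+T_+\} = \{\+0\}$, which together with the dimension match $p_1+n_+=n$ yields $\bbR^n = \mathrm{span}\{\+M^T\+G_e\}\oplus\mathrm{span}\{\+B\+V_3\+T_+\}$. Projecting $\+g$ along $\mathrm{span}\{\+B\+V_3\+T_+\}$ onto $\mathrm{span}\{\+M^T\+G_e\}$ then uniquely determines $\+G_e^T\+f_e$, and Theorem~\ref{thm:01} gives existence and uniqueness of $\+w(y)$.

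\textbf{Main obstacle.} The transversality claim underlying both steps is the hard part: showing that two subspaces of $\bbR^n$, implicitly built from $\+M$, $\+S$, $\+Y_3$, $\+Z_3$, $\+T_+$, and $\+G_e$, intersect only at the origin. The delicate point is coupling the generalized eigenrelations of $\+Q_{33}^{-1}\+A_{33}$ with the positive definiteness of $\+S$ while carefully respecting the even/odd block structure \eqref{eq:MM}.
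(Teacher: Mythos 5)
Your setup is exactly the paper's: same $\+B$, same reduction to the two conditions of Theorem \ref{thm:01}, same observation that $n_+=n-p_1$ when $r_2=0$ so that the $p_1$ free parameters $\+G_e^T\+f_e$ can absorb the rank defect, and the same correct auxiliary facts ($\+M^T\+S^{-1}\+M$ invertible, $\+M^T\+G_e$ of full column rank $p_1$). But the two claims on which everything rests --- $\mathrm{rank}(\+B\+V_3\+T_+)=n_+$ and $\mathrm{span}\{\+M^T\+G_e\}\cap\mathrm{span}\{\+B\+V_3\+T_+\}=\{\+0\}$ --- are precisely the ones you defer to an unspecified ``transversality argument,'' and you acknowledge this as the main obstacle. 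As written, the proposal is therefore incomplete: the orthogonality relations $\+Z_3^T\+M^T\+G_e=\+0$ and $\+Y_3^T\+M\+G_o=\+0$ that you invoke are not by themselves enough, because they carry no information about the sign of the eigenvalues in $\+\Lambda_+$, which is where the restriction to the \emph{positive} spectrum must enter.

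The missing idea is a specific choice of test matrix. Write $\+Q_{33}=\+L\+L^T$ with $\+L=\mathrm{diag}(\+L_e,\+L_o)$, take $\+T_+=\+L^{-T}\+R_+$ with $\+R_+=[\+R_e;\+R_o]$ orthonormal columns, and use the even/odd pairing of eigenvalues (as in Lemma \ref{lem:41}) to normalize $\+R_e^T\+R_e=\+R_o^T\+R_o=\tfrac12\+I_{n_+}$, so $\+R_o$ is invertible. Testing against $\+R_o^T\+L_o^{-1}\+Z_3^T$ gives
\begin{equation*}
\+x^T\+R_o^T\+L_o^{-1}\+Z_3^T\+B\+V_3\+T_+\+x
=\tfrac12 b(\chi)\,\+x^T\+\Lambda_+\+x
+\+x^T\+R_o^T\+L_o^{-1}\+Z_3^T\+H\+Z_3\+L_o^{-T}\+R_o\+x,
\qquad \+H=\+M^T\+S^{-1}\+M,
\end{equation*}
and since $\+\Lambda_+>0$, $b(\chi)\ge 0$, $\+H>0$, and $\+Z_3\+L_o^{-T}\+R_o$ has full column rank, this quadratic form is positive definite. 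That single positivity statement delivers both claims at once: it forces $\mathrm{rank}(\+B\+V_3\+T_+)=n_+$, and since any $\+x\in\mathrm{span}\{\+M^T\+G_e\}=\mathrm{span}\{\+Z_2\}$ satisfies $\+Z_3^T\+x=\+0$, such an $\+x$ can lie in $\mathrm{span}\{\+B\+V_3\+T_+\}$ only if it is zero. Your reduction to ``$b(\chi)(\+M^T\+S^{-1}\+M)^{-1}\+M^T\+Y_3\+T_{+,e}\+c+\+Z_3\+T_{+,o}\+c=\+0\Rightarrow\+c=\+0$'' is not wrong, but to close it you would still need to couple $\+T_{+,e}$ to $\+T_{+,o}$ through the eigenrelation and exploit $\+\Lambda_+>0$; the quadratic-form formulation above is the clean way to do exactly that. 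Until one of these routes is carried out, the proof has a genuine gap.
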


\begin{proof}
	Denote by $\+H=\+M^T\+S^{-1}\+M$ and $\+B=[b(\chi)\+M^T,
	\+H]\in\bbR^{n\times(m+n)}$. By Theorem \ref{thm:01},
	we substitute $\+V_2^T\+w=\+0$ and $\+V_3^T\+w=\+T_+\+z$ into 
	(\ref{eq:wbc2}). Due to the block structure
	(\ref{eq:G2})(\ref{eq:V3}), we write $\+f_e=\+G_e
	\+G_e^T\+f_e+(\+I_m-\+G_e\+G_e^T)\+f_e$. Then the boundary condition
	(\ref{eq:wbc2}) becomes
	\begin{eqnarray}\label{eq:tem22}
		&&b(\chi)\+M^T\+G_e(\+X_e\+X_e^T\+G_e^T\+w_e(0)+\+G_e^T\+f_e) +
		\+B\+V_3 (\+T_+\+z(0)) \notag \\
		&=& -\+H\+f_o-b(\chi)\+M^T(\+I_m-\+G_e\+G_e^T)\+f_e.
	\end{eqnarray}
	We only need to claim that $\mathrm{rank}(\+B\+V_3\+T_+)=
	\mathrm{rank}(\+T_+)$ and $\mathrm{rank}([\+M^T\+G_e,
	\+B\+V_3\+T_+])=n$. 
	
	In fact, if the two claims are right, when $b(\chi)=0,$ we can 
	solve a unique $\+z(0)$ if $\+H\+f_o\in\mathrm{span}\{\+B\+V_3
	\+T_+\}$. When $b(\chi)>0$, we can uniquely solve 
	$\+G_e^T\+f_e$ and $\+z(0)$ for arbitrary given $\+f_o$ and
	$(\+I_m-\+G_e\+G_e^T)\+f_e$, since $\+Y_1^T\+w_e$ is determined
	(\ref{eq:V1T}) by $\+z$ and $\+G_e^T\+G_e=\+I_{p_1}$.
	
	We first show $\mathrm{rank}(\+B\+V_3\+T_+)=\mathrm{rank}(\+T_+).$
	Let $\tilde{m}=m-r_1-p_2$. By (\ref{eq:T}), we assume $\+T_+=\+L^{-T}
	\+R_+$ with $\+L^{-1}\+A_{33}\+L^{-T}\+R_+=\+R_+\+\Lambda_+$ and
	$\+R_+^T\+R_+=\+I_{n_+}$. Due to the block structure
	of $\+Q_{33}=\+L\+L^T$, we assume $\+L=\mathrm{diag}(\+L_e,
	\+L_o)$ with $\+L_e\in\bbR^{\tilde{m}\times \tilde{m}}$ and
	$\+L_o\in\bbR^{n_+\times n_+}$. Since $\+L^{-1}\+A_{33}\+L^{-T}$
	has a block structure, its positive and negative
	eigenvalues always appear in pair and have a special relation 
	(cf. proof of Lemma \ref{lem:41}). So 
	if we write $\+R_+ = \begin{bmatrix}
	\+R_e\\ \+R_o \end{bmatrix}$ where $\+R_e\in\bbR^{\tilde{m}\times
	n_+}$ and $\+R_o\in\bbR^{n_+\times n_+}$, we can assume
	$\+R_e^T\+R_e=\+R_o^T\+R_o=\frac{1}{2}\+I_{n_+}$.
	Since $\+S>0$ and $\mathrm{rank}(\+M)=n$, we have $\+H>0.$
	Now we have 
	\begin{eqnarray*}
		\+Z_3^T\+B\+V_3\+T_+  
		&=& b(\chi)\+Z_3^T\+M^T\+Y_3\+L_e^{-T}\+R_e + 
		\+Z_3^T\+H\+Z_3\+L_o^{-T}\+R_o \\
		&=& b(\chi)\+L_o\+R_o\+\Lambda_+ +
		\+Z_3^T\+H\+Z_3\+L_o^{-T}\+R_o.
	\end{eqnarray*}
	For any $\+x\in\bbR^{n_+}$, 
	since $\+\Lambda_+>0$ and $b(\chi)\geq 0$ when
	$\chi\in[0,1]$, we have
	\begin{eqnarray}\notag
		\+x^T\+R_o^T\+L_o^{-1}\+Z_3^T\+B\+V_3\+T_+\+x &=&
		\frac{1}{2}b(\chi)\+x^T\+\Lambda_+\+x
		+ \+x^T\+R_o^T\+L_o^{-1}\+Z_3^T\+H\+Z_3\+L_o^{-T}\+R_o\+x \\
		&\geq& \+x^T\+R_o^T\+L_o^{-1}\+Z_3^T\+H\+Z_3\+L_o^{-T}\+R_o\+x
		\geq 0. 
		\label{eq:tm33}
	\end{eqnarray}
	The equality holds if and only if $\+x=\+0$ since $\+Z_3$ is also
	of full column rank. Hence,
	\[ n_+=\mathrm{rank}(\+R_o^T\+L_o^{-1}\+Z_3^T\+B\+V_3\+T_+)\leq
	\mathrm{rank}(\+B\+V_3\+T_+)\leq n_+\quad\Rightarrow\quad
	\mathrm{rank}(\+B\+V_3\+T_+)=n_+.
	\]

	Then we show that $\mathrm{rank}([\+B\+V_3\+T_+,\+M^T\+G_e])=n.$
	Since $r_2=0$ and $\mathrm{rank}(\+M^T\+G_e)=p_1,$ it's enough to
	show that 
	\[\mathrm{span}\{\+B\+V_3\+T_+\}\cap\mathrm{span}\{\+M^T\+G_e\}
	=\{\+0\}.\] 
	If $\+x\in\mathrm{span}\{\+M^T\+G_e\}=\mathrm{span}\{\+
	Z_2\}$, then we have $\+Z_3^T\+x=\+0$ since $\+Z_3^T\+Z_2=\+0.$ 
	But from (\ref{eq:tm33}), if this $\+x$ also belongs to
	$\mathrm{span}\{\+B\+V_3\+T_+\}$, then $\+x=\+0.$ This completes
	the proof.
\hfill
\end{proof}

\begin{remark} \label{rem:33}
	The modification of \eqref{eq:obc} is not unique. 
	From the above proof, we know that for any 
	symmetric positive definite matrix $\+H\in\bbR^{n\times n}$,
	the boundary condition
	\[\+H(\+w_o(0)+\+f_o)=-b(\chi)\+M^T(\+w_e(0)+\+f_e)\]
	would satisfy a theorem similarly
	as Theorem \ref{thm:32}. 
	We leave the comparison of different modifications elsewhere.
	\renew{We note that the solvable Maxwell-type boundary condition
	with an even-odd parity form appears commonly in literature for many
	other problems \cite{Bern2010,Li2017,Sarna2018}}.

	Note that $\+M$ is invertible when $m=n$.
	So the new boundary condition \eqref{eq:wbc2} is equivalent to
	Grad's boundary condition \eqref{eq:obc} when $m=n$. This is also
	an advantage of the modification.
\end{remark}

\section{Applications}
\subsection{Kramers' Problem.}
\emph{Kramers' problem} \cite{Kramers1949} concerns the tangential 
velocity of a gas flow when passing by an infinite plate, with the 
only driven force from the tangential stress. For the Shakhov model,
when $D=3$, we can choose the index set $\mathbb{I}$ as
\begin{equation*}
	\mathbb{I}=\{\+e_1+2\+e_j+\alpha_2\+e_2,\ j=1,3,\ 0\leq\alpha_2
\leq M-3\}\cup\{\+e_1+\alpha_2\+e_2,\ 0\leq\alpha_2\leq M-1\}.
\end{equation*}
This choice gives $p_1=1,\ p_2=0,\ p=1$ and
$r_1=1,\ r_2=0,\ r=1$. By Theorem
\ref{thm:01}, we can solve $p+r=2$ variables by (\ref{eq:V1T})
and (\ref{eq:V2T}):
\begin{eqnarray}
\sigma_{12,K}(y) &=& 0,\\ \label{eq:uK}
u_{1,K}(y) &=& -\sqrt{2}\+w[\+e_1+2\+e_2](y).
\end{eqnarray}
In (\ref{eq:wbc2}), we suppose the wall is motionless. Let non 
zero entries of $\+f_e$ and $\+f_o$ be
$\+f_e[\+e_1]=u_{1,B}$ and  
$\+f_o[\+e_1+\+e_2]=\sigma_{12,B}$, where 
${\sigma}_{12,B}$ is given.
Now $\+G_e^T\+f_e=\+f_e[\+e_1]=u_{1,B}.$ 
By Theorem \ref{thm:32}, when $0<\chi\leq 1$,
we can solve constants $\+z\in\bbR^{n-1}$ and $c_0\in\bbR$ 
depending on $\sigma_{12,B}$ that
\begin{eqnarray*} 
	{u}_{1,K}(y) = \+c_1^T\exp\left(-\+\Lambda_+^{-1}
	y\right)\+z; \qquad
	{u}_{1,B} = c_0, 
\end{eqnarray*}
where $\+c_1$ and $\+\Lambda_+$ are determined by the proof process
of Theorem \ref{thm:01}.

\begin{remark}
Under the above settings, we can formally establish the relationship
between the moment system and kinetic equations for
Kramers' problem \cite{Siewert2001}. The details can refer to
\cite{Lijun2017}.
\end{remark}

Now we define the viscous slip coefficient $\eta$ as
\begin{equation}
	\eta= -\mu {u}_{1,B}/{{\sigma}_{12,B}},
\end{equation}
where $\mu$ is the viscosity coefficient determined by the collision
term. For the Shakhov model with different $\Pr$, the value of $\mu$
should be same (cf. \cite{Loyalka1990,Hattori2019}). We let 
$\mu=\sqrt{2}/2$ to follow Siewert's results \cite{Siewert2001}.
Below we use the Shakhov model to refer to the case $\Pr=2/3$ 
especially. 
We also consider the normalized 
velocity profile in the Knudsen layer, i.e., the velocity defect
\begin{equation}
	u_d(y) = \mu {u}_{1,K}(y)/{\sigma}_{12,B}.
\end{equation}

\begin{figure}[!htb]
\pgfplotsset{width=0.48\textwidth}
\centering
\begin{tikzpicture} 
\begin{axis}[
    xlabel=$M$, 
    ylabel=$\eta$, 
    tick align=outside, 
    legend style={at={(0.73,0.42)},anchor=north} 
    ]
\addplot[smooth,mark=*,blue] table {pic/BGK_K_001.dat};
\addlegendentry{BGK}
\addplot[smooth,mark=square,red] table {pic/Sh_K_001.dat};
\addlegendentry{Shakhov}
\addplot[smooth,black,dashed,ultra thick]
coordinates{(0,1.01619) (150,1.01619) };
\addlegendentry{ref: BGK}
\addplot[smooth,black,dotted,ultra thick]
coordinates{(0,1.01837) (150,1.01837) };
\addlegendentry{ref: Shakhov}
\end{axis}
\end{tikzpicture}
\hskip 3pt %
\begin{tikzpicture} 
\begin{axis}[
    xlabel=$\mu y$, 
    ylabel=$u_d$, 
    tick align=outside, 
    legend style={at={(0.7,0.9)},anchor=north} 
    ]
\addplot[smooth,mark=*,blue] table {pic/BGK_K_002.dat};
\addlegendentry{BGK}
\addplot[smooth,mark=square,red] table {pic/Sh_K_002.dat};
\addlegendentry{Shakhov}
\end{axis}
\end{tikzpicture}
\caption{ \emph{Left}: Slip coefficients of different even
   	$M$ for various models when $\chi=1$. \emph{Right}: 
	The normalized velocity profile when $\chi=1$ and
	$M=80$.
}
\label{fig:K1}
\end{figure}
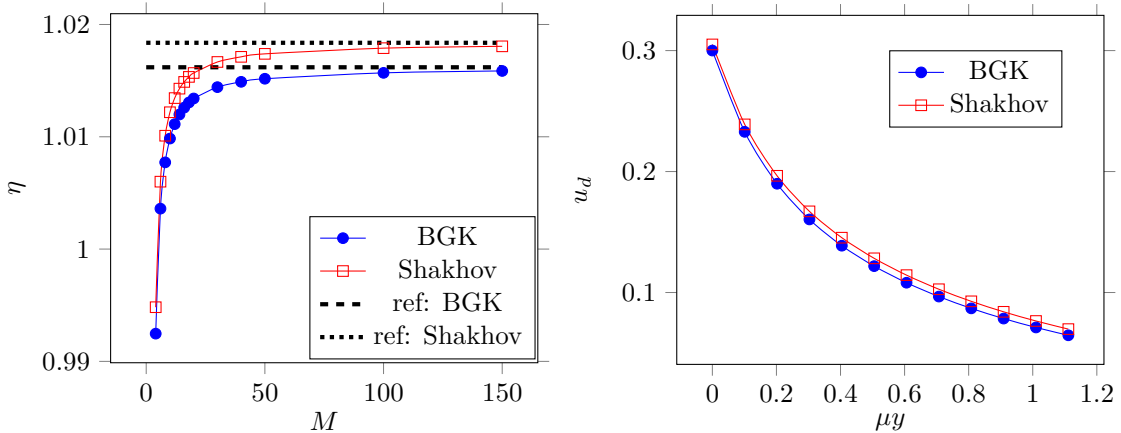

Fig.\ref{fig:K1} exhibits the viscous slip coefficient and
the normalized velocity profile calculated
by our method when $\chi=1$ and $M$ is an even number.
The reference values are
$\eta_{BGK}=1.01619$ and $\eta_{Sh}=1.01837$ (cf.
\cite{Siewert2001,Hattori2019}).
From Fig.\ref{fig:K1}, we see a converging
trend when $M$ increases, and the relative error 
compared with the reference value would be
lower than $1\%$ when $M>10.$ We also find that 
the Shakhov model has a Knudsen layer thicker than the BGK model. 

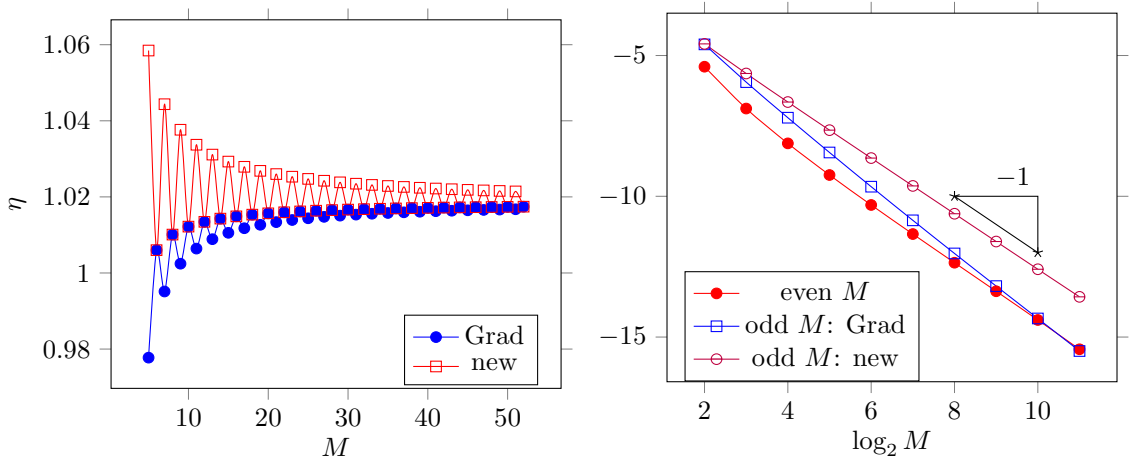
\begin{figure}[!htb]
\pgfplotsset{width=0.5\textwidth}
\centering
\begin{tikzpicture} 
\begin{axis}[
    xlabel=$M$, 
    ylabel=$\eta$, 
    tick align=outside, 
    legend style={at={(0.8,0.2)},anchor=north} 
    ]
\addplot[smooth,mark=*,blue] table {pic/K1_old.dat};
\addlegendentry{Grad}
\addplot[smooth,mark=square,red] table {pic/K1_new.dat};
\addlegendentry{new}
\end{axis}
\end{tikzpicture}
\hskip 3pt %
\begin{tikzpicture} 
\begin{axis}[
    xlabel=$\log_2 M$, 
    %ylabel=$\eta$, 
    tick align=outside, 
    legend style={at={(0.3,0.3)},anchor=north} 
    ]
\addplot[smooth,mark=*,red] table {pic/K2_e.dat};
\addlegendentry{even $M$}
\addplot[smooth,mark=square,blue] table {pic/K2_o.dat};
\addlegendentry{odd $M$: Grad}
\addplot[smooth,mark=halfcircle,purple] table {pic/K2_no.dat};
\addlegendentry{odd $M$: new}
% slope triangle
\addplot plot coordinates {(8,-10) (10,-12)}
coordinate [pos=0] (A)
coordinate [pos=1] (B)
;
\draw (A) -| (B)
node [pos=0.35,anchor=south]
{\pgfmathprintnumber{-1}};
\end{axis}
\end{tikzpicture}

\caption{\emph{Left}: Slip coefficients $\eta$ for the BGK model
when $M$ ranges from 5 to 52. \emph{Right}: Log-log error 
	diagram of the slip coefficients for the BGK model. 
	y axis: $a_k$ or $b_k$. ($\chi=1$).}
\label{fig:K2}
\end{figure}

In Kramers' problem, we have $m=n$ when $M$ is an even number. So as 
shown in Remark \ref{rem:33}, the new boundary condition (\ref{eq:wbc2})
is equivalent to Grad's boundary condition (\ref{eq:obc}) when $M$
is even. Fig.\ref{fig:K2} compares the slip coefficients obtained by
different boundary conditions for the BGK model. 
Denoting by $\eta_M$ the slip
coefficient for a given $M$, we calculate
$a_{k-1}=\log_2|\eta_{BGK}-\eta_{2^k}|$ and $b_{k-1}=\log_2
|\eta_{BGK}-\eta_{2^k+1}|$ for $k=2,3,..,11.$ From Fig.\ref{fig:K2},
we see that when $M$ is an odd number, the new boundary condition
(\ref{eq:wbc2}) brings a totally different picture as in the even
case. Regardless of the choices,
we always observe the first-order convergence of $\eta_M$ when $M$
increases. The convergence rate may be affected by the discontinuity
of the velocity distribution function at the wall.

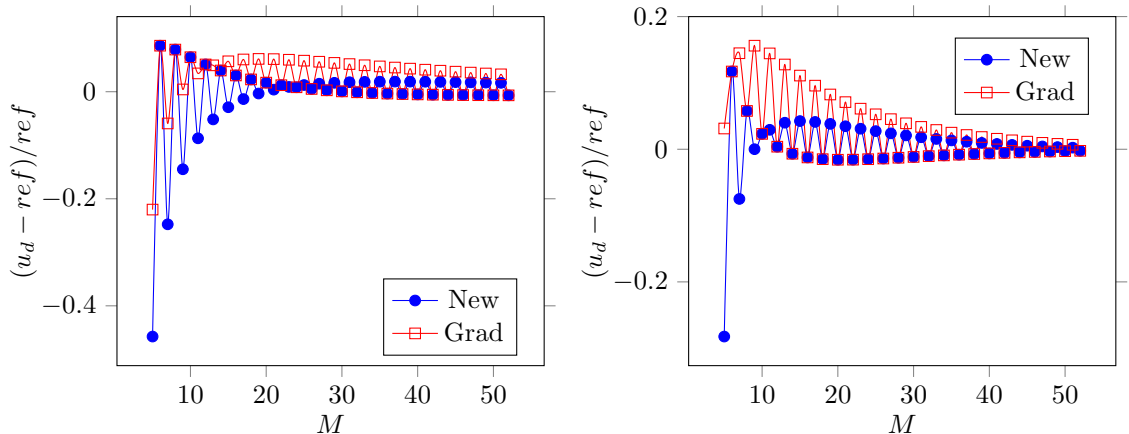
\begin{figure}[htb!]
\pgfplotsset{width=0.48\textwidth}
\centering
\begin{tikzpicture} 
\begin{axis}[
    xlabel=$M$, 
    ylabel=$(u_d-ref)/ref$, 
    tick align=outside, 
    legend style={at={(0.78,0.25)},anchor=north} 
    ]
\addplot[smooth,mark=*,blue] table {pic/B_401.dat};
\addlegendentry{New}
\addplot[smooth,mark=square,red] table {pic/B_402.dat};
\addlegendentry{Grad}
\end{axis}
\end{tikzpicture}
\hskip 3pt %
\begin{tikzpicture} 
\begin{axis}[
    xlabel=$M$, 
    ylabel=$(u_d-ref)/ref$, 
    tick align=outside, 
    legend style={at={(0.78,0.94)},anchor=north} 
    ]
\addplot[smooth,mark=*,blue] table {pic/B_403.dat};
\addlegendentry{New}
\addplot[smooth,mark=square,red] table {pic/B_404.dat};
\addlegendentry{Grad}
\end{axis}
\end{tikzpicture}
\caption{ Relative errors of the velocity defect for the BGK model
	when $\chi=0.1$ and $M$ ranges from 5 to 52.
	\emph{Left}: At $\mu y=0.5.$
   	\emph{Right}: At $\mu y=1.0.$
}
\label{fig:K4}
\end{figure}

From Fig.\ref{fig:K2}, we may conclude that the new boundary condition
gives a relatively large error in the slip coefficient, compared to
the Grad's one when $M$ is odd. But we find that it also gives a better 
approximation to the velocity defect $u_d(y)$ when $y$ is away from zero.
This is shown in Fig.\ref{fig:K4}, where the relative errors of $u_d(y)$
at $y=0.5\mu^{-1}$ and $y=\mu^{-1}$ are calculated for the BGK model 
with different $M$ when $\chi=0.1$. The reference values are 
from \cite{Siewert2001}.

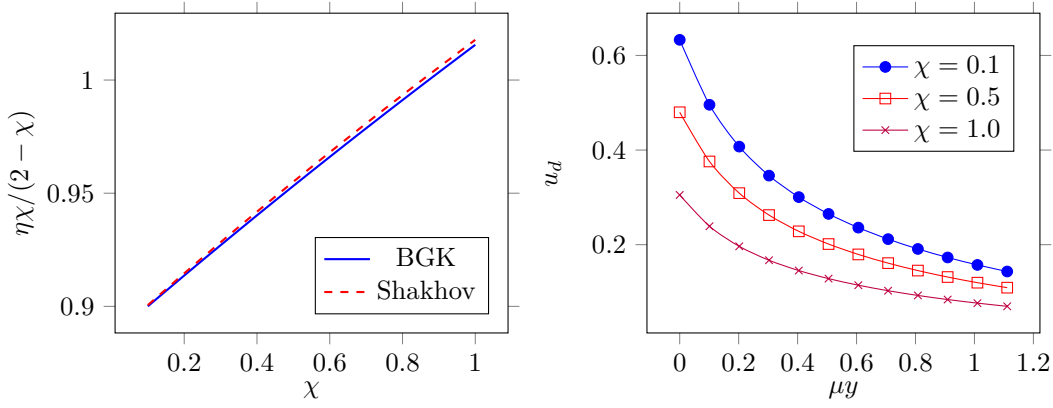
\begin{figure}[htb!]
\pgfplotsset{width=0.45\textwidth}
\centering
\begin{tikzpicture} 
\begin{axis}[
    xlabel=$\chi$, 
    ylabel=$\eta\chi/(2-\chi)$, 
    tick align=outside, 
    legend style={at={(0.73,0.3)},anchor=north} 
    ]
\addplot[smooth,blue,thick] table {pic/BGK_ex1.dat};
\addlegendentry{BGK}
\addplot[smooth,dashed,red,thick] table {pic/Sh_ex1.dat};
\addlegendentry{Shakhov}
\end{axis}
\end{tikzpicture}
\hskip 3pt %
\begin{tikzpicture} 
\begin{axis}[
    xlabel=$\mu y$, 
    ylabel=$u_d$, 
    tick align=outside, 
    legend style={at={(0.73,0.9)},anchor=north} 
    ]
\addplot[smooth,mark=*,blue] table {pic/Sh_K_031.dat};
\addlegendentry{$\chi=0.1$}
\addplot[smooth,mark=square,red] table {pic/Sh_K_032.dat};
\addlegendentry{$\chi=0.5$}
\addplot[smooth,mark=x,purple] table {pic/Sh_K_033.dat};
\addlegendentry{$\chi=1.0$}
\end{axis}
\end{tikzpicture}

\caption{\emph{Left}: $\eta\chi/(2-\chi)$.
	\emph{Right}: Velocity profile of the Shakhov model when $\chi$
		varies. ($M=80$).
}
\label{fig:K3}
\end{figure}

Fig.\ref{fig:K3} shows the velocity profile and slip coefficient
when the accommodation coefficient $\chi$ varies and
$M=80.$ We see that $\eta$ goes larger 
and the Knudsen layer becomes thicker when $\chi$ goes smaller.
This observation coincides with the classical qualitative results.

\subsection{Thermal Slip Problem.}
We consider the tangential velocity of the flow caused by a
temperature gradient in a direction parallel to the wall, which
is called \cite{Williams2001} the \emph{thermal slip problem}.
For the Shakhov model, when $D=3$, we choose the same $\mathbb{I}$
as in Kramers' problem:
\begin{equation*}
	\mathbb{I}=\{\+e_1+2\+e_j+\alpha_2\+e_2,\ j=1,3,\ 0\leq\alpha_2
\leq M-3\}\cup\{\+e_1+\alpha_2\+e_2,\ 0\leq\alpha_2\leq M-1\}.
\end{equation*}
The difference lies in the boundary condition (\ref{eq:wbc2}). We 
let $\+f_o$ be zero and non zero entries of $\+f_e$ be 
$\+f_e[\+e_1]=u_{1,B},\ \+f_e[3\+e_1]=\sqrt{3}q_{1,B},$ and 
$\+f_e[\+e_1+2\+e_i]=q_{1,B},\ i\neq 1,$ where $q_{1,B}$ is
a given constant. By Theorem \ref{thm:32}, when $0<\chi\leq 1$,
we can solve constants $\+z\in\bbR^{n-1}$ and $c_0\in\bbR$ 
depending on $q_{1,B}$ that
\begin{eqnarray*}
	{u}_{1,K}(y) = \+c_1^T\exp\left(-\+\Lambda_+^{-1}
	y\right)\+z, \qquad
	{u}_{1,B} = c_0, 
\end{eqnarray*}
where $\+c_1$ and $\+\Lambda_+$ are determined by the proof process
of Theorem \ref{thm:01}. 

Then we can define the thermal slip coefficient as
\begin{equation}
	\eta_t = -\frac{1}{2}\lambda u_{1,B}/q_{1,B},
\end{equation}
where $\lambda$ is the thermal conductivity coefficient determined
by the collision term. To agree with \cite{Siewert2002}'s results
about kinetic equations, we let $\lambda={\Pr}^{-1}\sqrt{2}/2$ and
represent the value of $\Pr\eta_t.$

\begin{table}[!htb] 
\centering 
\caption{The thermal slip coefficient $\Pr\eta_t$ compared with
	\cite{Siewert2002}}\label{tab:02}
\begin{tabular}{ccccccc} 
\toprule 
	$\chi$ & BGK-\cite{Siewert2002} & S-\cite{Siewert2002} & BGK$-M=12$ &
	S$-M=12$ & BGK$-M=84$ & S$-M=84$ \\ 
\midrule 
0.1 & 0.264178 & 0.266064 & 0.263578 & 0.265470 & 0.264101 & 0.265989\\
0.2 & 0.278151 & 0.281655 & 0.277030 & 0.280570 & 0.278009 & 0.281521\\
0.3 & 0.291924 & 0.296794& 0.290360 & 0.295311 & 0.291728 &  0.296615\\
0.4 & 0.305502 & 0.311501& 0.303568 & 0.309703 & 0.305263 &  0.311287\\
0.5 & 0.318891 & 0.325791& 0.316657 & 0.323758 & 0.318619 &  0.325554\\
0.6 & 0.332095 & 0.339683& 0.329630 & 0.337485 & 0.331799 &  0.339431\\
0.7 & 0.345120 & 0.353193& 0.342487 & 0.350894 & 0.344808 &  0.352935\\
0.8 & 0.357969 & 0.366335& 0.355231 & 0.363994 & 0.357650 &  0.366077\\
0.9 & 0.370648 & 0.379125& 0.367863 & 0.376794 & 0.370328 &  0.378873\\
1.0 & 0.383161 & 0.391575& 0.380287 & 0.389303 & 0.382847 &  0.391335\\
\bottomrule 
\end{tabular}
\end{table}

\begin{figure}[htb!]
	\centering
\begin{tikzpicture} 
\pgfplotsset{width=0.48\textwidth}
\begin{axis}[
    xlabel=$M$, 
    ylabel=$\eta_t$, 
    tick align=outside, 
    legend style={at={(0.8,0.2)},anchor=north} 
    ]
\addplot[smooth,mark=*,blue] table {pic/T5_B_old.dat};
\addlegendentry{Grad}
\addplot[smooth,mark=square,red] table {pic/T5_B_new.dat};
\addlegendentry{new}
\end{axis}
\end{tikzpicture}
\hskip 3pt %
\begin{tikzpicture} 
\pgfplotsset{width=0.48\textwidth}
\begin{axis}[
    xlabel=$M$, 
    ylabel=$\eta_t$, 
    tick align=outside, 
    legend style={at={(0.8,0.2)},anchor=north} 
    ]
\addplot[smooth,mark=*,blue] table {pic/T5_S_old.dat};
\addlegendentry{Grad}
\addplot[smooth,mark=square,red] table {pic/T5_S_new.dat};
\addlegendentry{new}
\end{axis}
\end{tikzpicture}

\caption{ The thermal slip coefficient when $M$ range from 5 to 53
	and $\chi=1.$ \emph{Left}: The BGK model.
   \emph{Right}: The Shakhov model with $\Pr=2/3.$
}
	\label{fig:T10}
\end{figure}
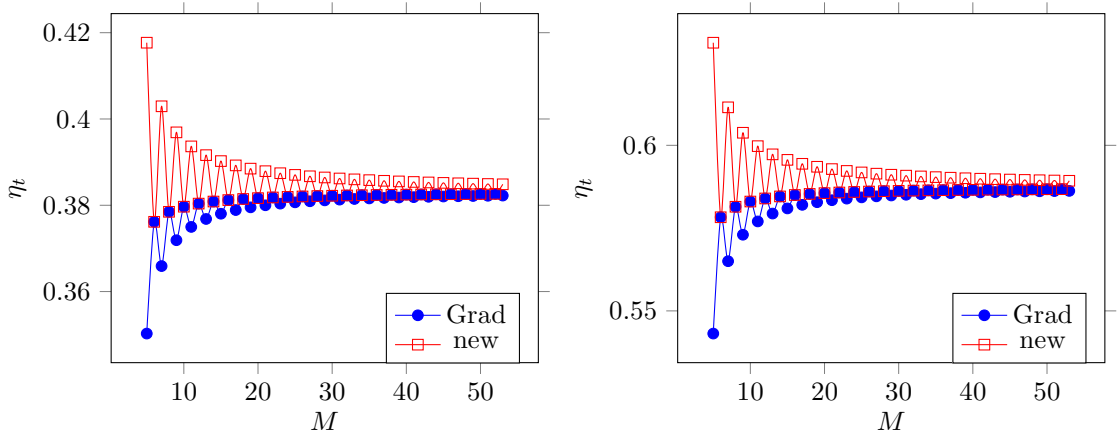

Table.\ref{tab:02} compares our results with \cite{Siewert2002}, where
the letter S means the Shakhov model with $\Pr=2/3.$ We see that our 
method already gives high accuracy results when $M=12$. Fig.\ref{fig:T10} shows
the converging trend when $M$ goes to infinity with $\chi=1$. We
can see the influence of different boundary conditions again from
Fig.\ref{fig:T10}.

\subsection{Temperature Jump Problem.}
The \emph{temperature jump problem} \cite{Welander1954} 
concerns the gas temperature near the wall when the flow passes
over an infinite plate, with the only driven force from a normal 
temperature gradient. 
For the Shakhov model, when $D=3$, the index set could be
\begin{equation}\label{eq:I22}
	\mathbb{I}=\{2\+e_j+\alpha_2\+e_2,\ j=1,3,\ 0\leq\alpha_2\leq M-2\}
\cup\{\alpha_2\+e_2,\ 0\leq\alpha_2\leq M\},
\end{equation}
which gives $p_1=2,\ p_2=1$ and $r_1=1,\ r_2=0$.
Three variables are solved by (\ref{eq:V2T}): 
\begin{eqnarray}
	{u}_{2,K}(y) &=& 0,\\
	{\rho}_K(y) + \sqrt{2}\+w[2\+e_2](y) &=& 0,\\
	5{q}_{2,K}(y) &:=& \sqrt{3}\+w[3\+e_2]
	+\+w[\+e_2+2\+e_1]+\+w[\+e_2+2\+e_3]= 0.
\end{eqnarray}
One variable $\sqrt{2} {\rho}_K(y) - \sum_{i}\+w[2\+e_i](y)$ is 
solved by (\ref{eq:V1T}), where $\sqrt{2}\sum_i \+w[2\+e_i] = 
3{\theta}_{K}$ according to (\ref{eq:LHME_h}).
In (\ref{eq:wbc2}), we suppose the wall temperature do not
change. We let $\+f_e=\+G_e\+G_e^T\+f_e$ in (\ref{eq:tem22})
with $\+f_e[2\+e_i]=\theta_B/\sqrt{2}$. Then we let
non zero entries of $\+f_o$ be $\+f_o[\+e_2+2\+e_i]=q_{2,B},
\ i\neq 2,$ and $\+f_o[3\+e_2]=\sqrt{3}q_{2,B}$, where
$q_{2,B}$ is given. By Theorem \ref{thm:32}, when $0<\chi\leq 1$, we can 
solve $\+z\in\bbR^{n-2}$ and $c_1\in\bbR$ depending on $q_{2,B}$
that 
\begin{eqnarray*}
	{\theta}_{K}(y) = \+c_1^T\exp\left(-\+\Lambda_+^{-1}
	y\right)\+z, \qquad 
	{\theta}_{B} = c_1,
\end{eqnarray*}
where $\+c_1$ and $\+\Lambda_+$ 
can be determined by the proof process of Theorem \ref{thm:01}.

Analogously, we define the temperature jump coefficient $\zeta$ as
\begin{equation}
	\zeta=-\frac{1}{\sqrt{2}}\lambda{\theta}_{B}/{{q}_{2,B}},
\end{equation}
where $\lambda$ is the thermal conductivity coefficient determined
by the collision term. Due to the conversion rule \cite{Hattori2019}, 
we let $\lambda={\Pr}^{-1}\sqrt{2}/2$. Under the choice
of (\ref{eq:I22}), Knudsen layer equations (\ref{eq:KL_p2}) in the
Shakhov case are the same as in the BGK case. Thus, 
we immediately have 
\begin{equation}
	\zeta(\Pr) = {\Pr}^{-1}\zeta(1),
\end{equation}
where $\zeta$ is viewed as a function of $\Pr$ in the Shakhov case.
This result coincides with \cite{Hattori2019} and leads us to only
consider the jump coefficient in the BGK case.
The normalized temperature profile in the Knudsen layer, or
called the temperature defect, is defined as 
\begin{equation}
	\theta_d(y) = \frac{\lambda}{\sqrt{2}}{\theta}_{K}(y)/{q}_{2,B}.
\end{equation}

Now $m=n$ when $M$ is an odd number, where the boundary condition
(\ref{eq:wbc2}) is equivalent to Grad's (\ref{eq:obc}). So we first
consider the odd case. Table.\ref{tab:01} compares our results of the
jump coefficient with the kinetic results \cite{Siewert2000}. We see 
that the jump
coefficient goes larger when $\chi$ goes smaller. Our solutions seem
to agree with reference solutions well with not too many moments. 
In fact, when $M=13$, the relative error is less than $1\%$ in most
cases.

\begin{table}[!htb] 
\centering 
\caption{The temperature jump coefficient compared with Barichello 
and Siewert's results \cite{Siewert2000}}\label{tab:01}
\begin{tabular}{cccccccc} 
\toprule 
	$\chi$ & \cite{Siewert2000} & $M=3$ & $M=5$ &
	$M=7$ & $M=9$ & $M=11$& $M=13$\\ 
\midrule 
0.1 & 21.4501 & 21.0856 & 21.3565 & 21.3957 & 21.4119 & 21.4208 &
	21.4263\\
0.3 & 6.63051 & 6.31159& 6.55416 & 6.58698 & 6.60028 & 6.60742 &
	6.61185\\
0.5 & 3.62913 & 3.35382& 3.56804 & 3.59507 & 3.60574 & 3.61139 &
	3.61487\\
0.6 & 2.86762 & 2.61342& 2.81345 & 2.83779 & 2.84726 & 2.85224 &
	2.85529\\
0.7 & 2.31753 & 2.08401& 2.26984 & 2.29162 & 2.29997 & 2.30432 &
	2.30698\\
0.9 & 1.57036 & 1.37681& 1.53420 & 1.55126 & 1.55758 & 1.56081 &
	1.56276\\
1.0 & 1.30272 & 1.12868& 1.27183 & 1.28673 & 1.29213 & 1.29488 &
	1.29652\\
\bottomrule 
\end{tabular}
\end{table}

Fig.\ref{fig:T1} presents the results of different boundary
conditions, where the reference solution is from
\cite{Siewert2000}. From the left picture,
we again see that when $M$ is an even number, the jump coefficient 
corresponding to the new boundary condition displays a decreasing
converging trend. From the right picture, we see that despite 
a relatively larger gap at $y=0$, the temperature defect
of the new boundary condition approaches faster to 
the reference solution than in the Grad's case. 

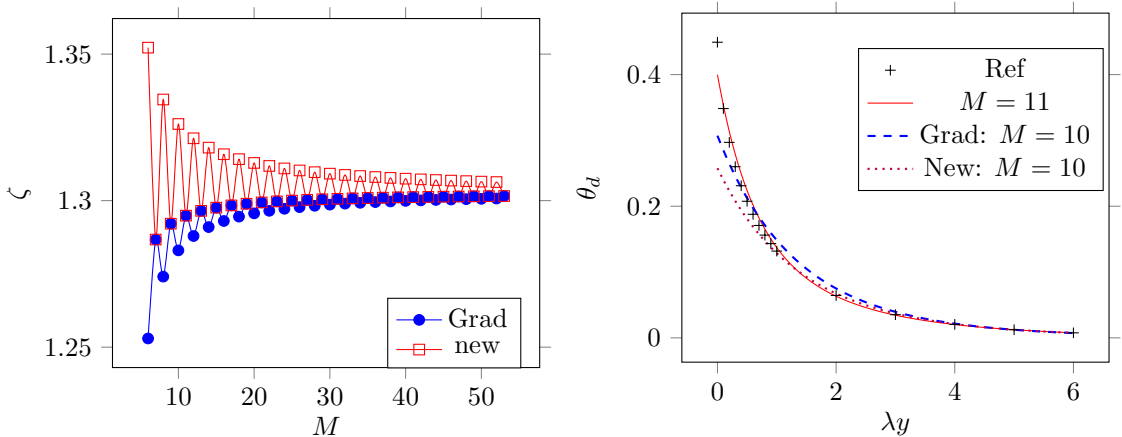
\begin{figure}[htb!]
	\centering
\begin{tikzpicture} 
\pgfplotsset{width=0.48\textwidth}
\begin{axis}[
    xlabel=$M$, 
    ylabel=$\zeta$, 
    tick align=outside, 
    legend style={at={(0.8,0.2)},anchor=north} 
    ]
\addplot[smooth,mark=*,blue] table {pic/T1_old.dat};
\addlegendentry{Grad}
\addplot[smooth,mark=square,red] table {pic/T1_new.dat};
\addlegendentry{new}
\end{axis}
\end{tikzpicture}
\hskip 3pt %
\begin{tikzpicture} 
\pgfplotsset{width=0.48\textwidth}
\begin{axis}[
    xlabel=$\lambda y$, 
    ylabel=$\theta_d$, 
    tick align=outside, 
    legend style={at={(0.7,0.9)},anchor=north} 
    ]
\addplot[only marks,mark=+,black] coordinates {
(0,0.449201) (0.1,0.348416) (0.2,0.297036) (0.3,0.260056)
(0.4,0.231136) (0.5,0.207516) (0.6,0.187706) (0.7,0.170756)
(0.8,0.156066) (0.9,0.143196) (1,0.131806) (2,0.064406)
(3,0.035206) (4,0.020426) (5,0.012306) (6,0.007626)
%(7,0.004826) (8,0.003106) (9,0.002016) (10,0.001316)
};
\addlegendentry{Ref}
\addplot[smooth,red] table {pic/041.dat};
\addlegendentry{$M=11$}
\addplot[smooth,blue,dashed,thick] table {pic/042.dat};
\addlegendentry{Grad: $M=10$}
\addplot[smooth,purple,dotted,thick] table {pic/043.dat};
\addlegendentry{New: $M=10$}
\end{axis}
\end{tikzpicture}

\caption{\emph{Left}:The temperature jump coefficient $\zeta$
	when $M$ ranges from 6 to 53.
   \emph{Right}: The temperature defect $\theta_d(y)$ for different
   boundary conditions. ($\chi=1$).
}
	\label{fig:T1}
\end{figure}

\subsection{Remarks.}
We remark three points which are observed from the numerical
results:
\begin{enumerate}
\item We may only need a moderate number of moments, e.g., $10<M<20$,
  to depict the Knudsen layer quite accurately. This empirical choice
  of $M$ may benefit the practical usage.
\item \renew{If $m>n$, the different boundary conditions would affect
	the results. When $y$ is near zero, the new boundary conditions
	may provide relatively greater errors about the slip/jump 
	coefficients or the profile of Knudsen layer solutions. 
	When $y$ goes far away from zero, the new boundary condition
	provides a better approximation, as shown in Fig. \ref{fig:K4}
	and Fig. \ref{fig:T1} with $y>1$.}
\item There is always a converging trend when $M$ goes to infinity.
  A possible analysis of the convergence and accuracy is beyond the
  scope of this paper.
\end{enumerate}

\section{Conclusions}

We imposed a type of Maxwell boundary conditions on the linear half-space
moment systems. The proposed boundary condition is derived from the
classical diffuse-specular model and proved to satisfy some
well-posedness criteria of half-space boundary value problems. The
procedure has been applied to flow problems with the Shakhov collision
term. In this way, the model with new boundary condition was validated
that it can capture viscous and thermal Knudsen layers well with only
a few moments. It is straightforward to apply our model to other
Knudsen layer problems with various collision terms.

\section*{Acknowledgment}
\par This work is financially supported by the National Key R\&D
Program of China, Project Number 2020YFA0712000. We thank Dr.Jun Li
and Dr.Yizhou Zhou for their enthusiastic discussions with us.

%%% Local Variables:
%%% mode: latex
%%% TeX-master: "article"
%%% End:

%\begin{appendix}
% 参考文献格式。
\bibliographystyle{plain}
% 参考文献。
\bibliography{../../ATC}

\begin{thebibliography}{10}

\bibitem{Aoki2017}
K.~Aoki, C.~Baranger, M.~Hattori, S.~Kosuge, G.~Martal{\`o}, M.~Julien, and
  L.~Mieussens.
\newblock Slip boundary conditions for the compressible {N}avier{-S}tokes
  equations.
\newblock {\em J. Stat. Phys.}, 169:744--781, 2017.

\bibitem{1997Arnold}
A.~Arnold and U.~Giering.
\newblock An analysis of the {M}arshak conditions for matching {B}oltzmann and
  {E}uler equations.
\newblock {\em Math. Models Methods Appl. Sci.}, 07(04):557--577, 1997.

\bibitem{Bardos2006}
C.~Bardos, F.~Golse, and Y.~Sone.
\newblock Half-space problems for the {B}oltzmann equation: A survey.
\newblock {\em J. Stat. Phys.}, 124:275--300, 2006.

\bibitem{Siewert2000}
L.~Barichello and C.~Siewert.
\newblock The temperature-jump problem in rarefied-gas dynamics.
\newblock {\em Eur. J. Appl. Math.}, 11(4):353--364, 2000.

\bibitem{Bern2010}
N.~Bernhoff.
\newblock On half-space problems for the discrete {B}oltzmann equation.
\newblock {\em Il Nuovo Cimento C}, 33(1):47--54, 2010.

\bibitem{BGK}
P.~Bhatnagar, E.~Gross, and M.~Krook.
\newblock A model for collision processes in gases. {I}. small amplitude
  processes in charged and neutral one-component systems.
\newblock {\em Phys. Rev.}, 94(3):511--525, 1954.

\bibitem{Bird}
G.~Bird.
\newblock {\em Molecular Gas Dynamics and the Direct Simulation of Gas Flows}.
\newblock Oxford: Clarendon Press, 1994.

\bibitem{Broadwell}
J.~{Broadwell}.
\newblock {Study of rarefied shear flow by the discrete velocity method}.
\newblock {\em J. Fluid Mech.}, 19(3):401--414, 1964.

\bibitem{Fan_new}
Z.~Cai, Y.~Fan, and R.~Li.
\newblock Globally hyperbolic regularization of {G}rad's moment system.
\newblock {\em Comm. Pure Appl. Math.}, 67(3):464--518, 2014.

\bibitem{framework}
Z.~Cai, Y.~Fan, and R.~Li.
\newblock A framework on moment model reduction for kinetic equation.
\newblock {\em SIAM J. Appl. Math.}, 75(5):2001--2023, 2015.

\bibitem{Cai2011}
Z.~Cai, R.~Li, and Z.~Qiao.
\newblock N{R}$xx$ simulation of microflows with {S}hakhov model.
\newblock {\em SIAM J. Sci. Comput.}, 34(1):339--369, 2011.

\bibitem{Cercignani1969}
C.~Cercignani.
\newblock {\em Mathematical Methods in Kinetic Theory}.
\newblock Springer US, New York, 1969.

\bibitem{Chen2019}
H.~Chen, Q.~Li, and J.~Lu.
\newblock A numerical method for coupling the {BGK} model and {E}uler equations
  through the linearized {K}nudsen layer.
\newblock {\em J. Comput. Phys.}, 398:108893, 2019.

\bibitem{Lijun2017}
Y.~Fan, J.~Li, R.~Li, and Z.~Qiao.
\newblock Resolving {K}nudsen layer by high order moment expansion.
\newblock {\em Contin. Mech. Thermodyn.}, 31(5):1313--1337, 2019.

\bibitem{Grad1949N}
H.~Grad.
\newblock Note on {N}-dimensional {H}ermite polynomials.
\newblock {\em Comm. Pure Appl. Math.}, 2(4):325--330, 1949.

\bibitem{Grad1949}
H.~Grad.
\newblock On the kinetic theory of rarefied gases.
\newblock {\em Comm. Pure Appl. Math.}, 2(4):331--407, 1949.

\bibitem{Gu2014}
X.~Gu and D.~Emerson.
\newblock Linearized-moment analysis of the temperature jump and temperature
  defect in the {K}nudsen layer of a rarefied gas.
\newblock {\em Phys. Rev. E}, 89(6):063020, 2014.

\bibitem{Hattori2019}
M.~Hattori and S.~Takata.
\newblock Slip/jump coefficients and {K}nudsen-layer corrections for the
  {S}hakhov model occurring in the generalized slip-flow theory.
\newblock {\em AIP Conference Proceedings}, 2132(1):130012, 2019.

\bibitem{Hil2013}
D.~Hilditch.
\newblock An introduction to well-posedness and free-evolution.
\newblock {\em Int. J. Mod. Phys. A}, 28(22n23):1340015, 2013.

\bibitem{Hu2019}
Z.~Hu and G.~Hu.
\newblock An efficient steady-state solver for microflows with high-order
  moment model.
\newblock {\em J. Comput. Phys.}, 392:462--482, 2019.

\bibitem{Kramers1949}
H.~Kramers.
\newblock On the behaviour of a gas near a wall.
\newblock {\em Il Nuovo Cimento (1943-1954)}, 6(2):297--304, 1949.

\bibitem{1960Local}
P.~Lax and R.~Phillips.
\newblock Local boundary conditions for dissipative symmetric linear
  differential operators.
\newblock {\em Comm. Pure Appl. Math.}, 13(3):427--455, 1960.

\bibitem{Lu2017}
Q.~Li, J.~Lu, and W.~Sun.
\newblock A convergent method for linear half-space kinetic equations.
\newblock {\em ESAIM: Math. Model. Numer. Anal.}, 51(5):1583--1615, 2017.

\bibitem{Li2017}
Q.~Li, J.~Lu, and W.~Sun.
\newblock Half-space kinetic equations with general boundary conditions.
\newblock {\em Math. Comput.}, 86(305):1269--1301, 2017.

\bibitem{Lilley2007}
C.~Lilley and J.~Sader.
\newblock Velocity gradient singularity and structure of the velocity profile
  in the {K}nudsen layer according to the {B}oltzmann equation.
\newblock {\em Phys. Rev. E}, 76:026315, 2007.

\bibitem{Loyalka1989}
S.~Loyalka.
\newblock Temperature jump and thermal creep slip: Rigid sphere gas.
\newblock {\em Phys. Fluids A}, 1(2):403--408, 1989.

\bibitem{Loyalka1990}
S.~Loyalka and K.~Hickey.
\newblock The {K}ramers problem: Velocity slip and defect for a hard sphere gas
  with arbitrary accommodation.
\newblock {\em Z. Angew. Math. Phys.}, 41:245--253, 1990.

\bibitem{Rana2013}
A.~Rana, M.~Torrilhon, and H.~Struchtrup.
\newblock A robust numerical method for the {R}13 equations of rarefied gas
  dynamics: {A}pplication to lid driven cavity.
\newblock {\em J. Comput. Phys.}, 236(1):169--186, 2013.

\bibitem{Rauch1985}
J.~Rauch.
\newblock Symmetric positive systems with boundary characteristic of constant
  multiplicity.
\newblock {\em Trans. Amer. Math. Soc.}, 291(1):167--187, 1985.

\bibitem{Sarna2018}
N.~Sarna and M.~Torrilhon.
\newblock On stable wall boundary conditions for the {H}ermite discretization
  of the linearised {B}oltzmann equation.
\newblock {\em J. Stat. Phys.}, 170:101--126, 2018.

\bibitem{Shakhov}
E.~Shakhov.
\newblock Generalization of the {K}rook kinetic relaxation equation.
\newblock {\em Fluid Dyn.}, 3(5):95--96, 1968.

\bibitem{Siewert2001}
C.~Siewert.
\newblock {K}ramers' problem for a variable collision frequency model.
\newblock {\em Eur. J. Appl. Math.}, 12(2):179--191, 2001.

\bibitem{Siewert2002}
C.~Siewert and F.~Sharipov.
\newblock Model equations in rarefied gas dynamics: Viscous-slip and
  thermal-slip coefficients.
\newblock {\em Phys. Fluids}, 14(12):4123--4129, 2002.

\bibitem{2008Linear}
H.~Struchtrup.
\newblock Linear kinetic heat transfer: Moment equations, boundary conditions,
  and {K}nudsen layers.
\newblock {\em Physica A}, 387(8-9):1750--1766, 2008.

\bibitem{Torrilhon2009}
M.~Torrilhon.
\newblock Special issues on moment methods in kinetic gas theory.
\newblock {\em Contin. Mech. Thermodyn.}, 21(5):341--343, 2009.

\bibitem{Wang2019}
Y.~Wang and Z.~Cai.
\newblock Approximation of the {B}oltzmann collision operator based on
  {H}ermite spectral method.
\newblock {\em J. Comput. Phys.}, 397:108815, 2019.

\bibitem{Welander1954}
P.~Welander.
\newblock On the temperature jump in a rarefied gas.
\newblock {\em Ark. Fys.}, 7(7):507, 1954.

\bibitem{Williams2001}
M.~Williams.
\newblock A review of the rarefied gas dynamics theory associated with some
  classical problems in flow and heat transfer.
\newblock {\em Z. Angew. Math. Phys.}, 52(3):500--516, 2001.

\bibitem{Gu2019}
W.~Yang, X.~Gu, D.~Emerson, Y.~Zhang, and S.~Tang.
\newblock Modelling thermally induced non-equilibrium gas flows by coupling
  kinetic and extended thermodynamic methods.
\newblock {\em Entropy}, 21(8):816, 2019.

\bibitem{Yong2019}
W.~Yong.
\newblock Boundary stabilization of hyperbolic balance laws with characteristic
  boundaries.
\newblock {\em Automatica}, 101:252--257, 2019.

\end{thebibliography}
%\end{appendix}
\appendix

\section{Proofs of Lemmas}\label{app:D}
\subsection*{Proof of Lemma \ref{lem:01}}
\begin{proof}
	Since $\+Q\+G=\+0$ and $\mathrm{Null}(\+A)\cap \mathrm{Null}(\+Q)=\{\+0\}$,
	we have $\mathrm{rank}(\+A\+G)=\mathrm{rank}(\+G)=p$. Thus
	$\+V_2$ can be constructed by a Gram-Schmidt orthogonalization
	of $\+A\+G$. By definition,
	\[ (\+A\+G)^T\+V_1 = (\+A\+G)^T\+G\+X = \+0 \quad
	\Rightarrow \quad \+V_2^T\+V_1=\+0.\]
	Hence $[\+V_1,\+V_2]\in\bbR^{(m+n)\times (p+r)}$ is column
	orthogonal and there must be $m+n\geq p+r.$ Then $\+V_3\in
	\bbR^{(m+n)\times(m+n-p-r)}$ can be chosen as an orthogonal
	complement of $[\+V_1,\+V_2].$
	
	Now we show that $\+U=[\+U_1,\+U_2,\+V_3]$ is invertible,
	where $\+U_1=\+G,\ \+U_2=\+A\+G\+X.$ Since $\+V_3^T\+V_2=\+0,$ 
	we have $\+V_3^T\+U_2=\+0$. 
	To make $\+U$ invertible, it's enough to show that 
\begin{equation}\label{eq:v3g}
	\mathrm{span}\{\+V_3\}\cap\mathrm{span}\{\+G\}=\{\+0\}.
\end{equation}
In fact, if $\+G\+c_1=\+V_3\+c_2$ for some $\+c_1\in\bbR^p$ and $
\+c_2\in\bbR^{(m+n-p-r)}$, then 
	$(\+G\+c_1)^T\+A\+G=\+0$ since $\+V_3^T\+V_2=\+0$.
So $\+c_1\in\mathrm{span}\{\+X\}$ and  
$\+V_3\+c_2\in\mathrm{span}\{\+G\+X\}=\mathrm{span}\{\+V_1\}$. 
Thus, from $\+V_3^T\+V_1=\+0$ we have $\+c_2=\+0$, which 
implies (\ref{eq:v3g}).

Finally, we have $\mathrm{rank}(\+U_2^T\+A\+V_1) =
	\mathrm{rank}(\+U_2^T\+U_2) = r$. Meanwhile, $\+V_3^T
\+A\+V_1=\+V_3^T\+U_2=\+0$. Since $\mathrm{span}\{\+V_3\}
\cap\mathrm{span}\{\+G\}=\{\+0\}$ from (\ref{eq:v3g}),
we have $\+V_3^T\+Q\+V_3>0$. 
%\hfill
\end{proof}

\subsection*{Proof of Lemma \ref{lem:010}}
\begin{proof}
	Suppose  $\+Y_3^T\+M\+Z_3\+x = \+0$ for some 
	$\+x\in\bbR^{n-r_2-p_1}$. Since $[\+Y_1,\+Y_2,\+Y_3]$
	is orthogonal, there exists $\+x_1$ and $\+x_2$ such that
\begin{equation}\label{eq:tem2}
\+M\+Z_3\+x = \+Y_1\+x_1+\+Y_2\+x_2,\quad 
\+x_1\in\bbR^{r_1},\ \+x_2\in \bbR^{p_2}.
\end{equation}
Since $\+Z_3^T\+M^T\+Y_1=\+0$ and $\+Y_2^T\+Y_1=\+0$, we 
have $\+x_1=\+0.$
Note that the result (\ref{eq:v3g}) implies
$\mathrm{span}\{\+Z_3\}\cap\mathrm{span}\{\+G_o\}=\{\+0\}$
and $\mathrm{span}\{\+Y_2\}=\mathrm{span}\{\+M\+G_o\}$.
Then since $\+M$ and $\+Z_3$ are of full column rank,
there must be $\+x=\+0.$
So $\+Y_3^T\+M\+Z_3$ is of full column rank.
%\hfill
\end{proof}

\subsection*{Proof of Lemma \ref{lem:31}}
\begin{proof}
	According to the description before Lemma \ref{lem:31}, 
	we only need to show that $\mathrm{rank}(\+M)=n.$
	If not, there must be not all zero coefficients $r_{\+\beta}$,
	$\+\beta\in\mathbb{I}_o$, s.t.
	\begin{equation*}
		\ang{\xi_2\omega\sum_{\+\beta\in\mathbb{I}_o}
	r_{\+\beta}\phi_{\+\beta},\ \phi_{\+\alpha}}
	=0,\quad \forall \+\alpha\in\mathbb{I}_e.\end{equation*}
	However, for any $\+\gamma\in\mathbb{I}_e$ with $\gamma_2=0$,
	$\+M[\+\gamma,\+\beta]\neq 0$ if and only if
	$\+\beta=\+\gamma+\+e_2.$ So $r_{\+\gamma+\+e_2}=0$
	from the above relations. By induction we have
	$r_{\+\gamma+\beta_2\+e_2}=0,\ 1\leq\beta_2\leq M-|\+\gamma|,\
	\beta_2$ odd. Because of ({\bf C1}), any $\+\beta\in\mathbb{I}_o$
	can be represented in the above form $\+\gamma+\beta_2\+e_2$ and
	consequently, $r_{\+\beta}=0,\ \forall\+\beta\in\mathbb{I}_o$. 
	This contradiction
	shows that $\+M$ must have a full column rank of $n$.

	It's classical for the linearized Boltzmann 
	operator (cf. \cite[Chapter III]{Cercignani1969}) that $\+Q\geq 0.$
	It's also shown that
	$ \ang{\mathcal{L}[\omega\phi_{\+\beta}]
	\phi_{\+\alpha}} = 0,\ \+\alpha\in\mathbb{I}_e,
	\+\beta\in\mathbb{I}_o.$
	So $\+Q$ has a block diagonal structure. 
	
	Note that $\mathrm{Null}(\+Q)$ always has
	a constant dimension because
	of properties of the linearized
	Boltzmann operator. So direct calculations show 
	$\mathrm{Null}(\+A)\cap \mathrm{Null}(\+Q)=\{\+0\}.$ 
	When $\+J$ is replaced by $\+J_{Sh}$ in (\ref{eq:Q_Sh}), 
	direct calculation also shows the lemma right.
%\hfill
\end{proof}

\subsection*{Proof of Lemma \ref{lem:03}}
\begin{proof}
	By definition, $\+S$ is symmetric. For any $\+z\in \mathbb{R}^{m}$, 
	denote by
\begin{eqnarray*}
	f(\+\xi) = \sum_{\+\alpha\in\mathbb{I}_e} \+z[\+\alpha]
	\phi_{\+\alpha}(\+\xi),
\end{eqnarray*}
then we have $\displaystyle
	\+z^T \boldsymbol{S} \+z = \frac{\sqrt{2\pi}}{2}
	\ang{|\xi_2|\omega f^2}\geq 0.$ If $\+z^T \boldsymbol{S} \+z =0$,
	there must be $f=0.$ Due to the orthogonality, $f=0$ means
	$\+z=\+0.$ Hence, $\+S$ is symmetric positive definite.
%\hfill
\end{proof}

\section{Derivation of the Knudsen layer equations (\ref{eq:KL_pre})}
\label{app:F}
\renew{
The system (\ref{eq:KL_pre}) can be derived in several ways.
We put one method to derive it in the Appendix for completeness.
With notations in Definition 3.2, we consider the linearized 
Boltzmann equation
\[\pd{f}{t}+\sum_{d=1}^D\xi_d\pd{f}{x_d} = \mathcal{L}[f].\]
Choose an integer $M\geq 3$ and test the above equation with 
the orthonormal Hermite polynomials $\phi_{\+\alpha},\
|\+\alpha|\leq M$, then we have an unclosed moment system
\[\pd{\ang{f\phi_{\+\alpha}}}{t}+\sum_{d=1}^D
\pd{\ang{\xi_df\phi_{\+\alpha}}}{x_d} = 
\ang{\mathcal{L}[f]\phi_{\+\alpha}}.\]
Assume $\+f\in\bbR^N$ with $N=\#\{\+\alpha\in\bbN^D,|\+\alpha|\leq M\}$
and $\+f[\+\alpha]=\ang{f\phi_{\+\alpha}}$. Grad's ansatz
assumes $f=\omega\sum_{|\+\alpha|\leq M}\+f[\+\alpha]\phi_{\+\alpha}$,
which would close the above moment system, as
\[ \pd{\+f}{t} + \sum_{d=1}^D\+A_d\pd{\+f}{x_d} = -\+J\+f,\]
where $\+A_d$ is analogous to $\+A_2$ in (\ref{eq:LHME_A})
and $\+J$ is defined in (\ref{eq:Q}). After some careful definition,
macroscopic variables can be related to $\+f$ by similar formulas
like (\ref{eq:LHME_h}).}

Putting aside the detailed boundary condition, we make some 
Knudsen layer assumptions. Due to the rotation invariance, we
consider the plane-boundary lying at $\{\+x\in\bbR^D:x_2=0\}$,
introducing $\+x^w=(x_1,x_3,...,x_D)$ and rewriting variables 
in the local coordinates, i.e. $\+f(t,\+x) = \+f(t,\+x^w;x_2).$
Assume $\varepsilon$ is a small parameter representing the Knudsen
number, then a typical Knudsen layer ansatz could be
\begin{eqnarray}\label{eq:ant}
	\+f(t,\+x) = \+f_{B}(t,\+x^w;x_2) +
	\varepsilon{\+f}_{K}(t,\+x^w;y);\\ \notag \quad
	\+f_K(t,\+x^w;+\infty)=0;\ \ y=x_2/\varepsilon,
\end{eqnarray}
where $\+f_B$ and $\+f_K$ represent the bulk and  
Knudsen layer solutions respectively with $\+f_K=O(\+f_B)$. 
Formally expand all variables into series on $\varepsilon$,
with $h^{(j)}$ representing the $j$-th order term of $h$, e.g.
\begin{eqnarray}\label{eq:ant2}
		\+f_{B} = \+f_{B}^{(0)} + \varepsilon \+f_{B}^{(1)} +
		o(\varepsilon\+f_B),\quad \+f_K = \+f_K^{(0)} + o(\+f_K).			
\end{eqnarray}
Assume that there are no initial layers and variations
of $\+f_{B}$ are of the same order of $\+f_B$, while variations
of $\+f_K$ are relatively large in the direction normal to
the boundary, i.e.
\[
\pd{\+f_{K}}{s} = O(\+f_K),\ s=t,x_d,\ d\neq 2; \quad  
\pd{\+f_{K}}{y} = \pd{\+f_K}{x_2}\varepsilon = O(\+f_K);
\quad \text{\ when\ } y=O(1).
\]

\renew{Substitute the ansatz into the moment system and match the order
of $\varepsilon$, then we would get equations about $\+f_K^{(0)}$.
If we omit the arguments $t$ and $\+x^w$, rewriting 
$\+f_K^{(0)}(t,\+x^w;y)$ as $\+h(y)$, then we obtain the Knudsen
layer equations (\ref{eq:KL_pre}).
}

\section{Derivation of the boundary condition (\ref{eq:wbc1})}
\label{app:C}
First we make an ansatz that $\displaystyle f=\sum_{|\+\alpha|\leq M}
\omega f_{\+\alpha}\phi_{\+\alpha}$ and $\displaystyle \mathcal{M}^w=
\sum_{|\+\alpha|\leq M} \omega m_{\+\alpha}\phi_{\+\alpha}$, where
the coefficients
\[ f_{\+\alpha}=\ang{f\phi_{\+\alpha}},\quad m_{\+\alpha}=
\ang{\mathcal{M}^w\phi_{\+\alpha}}.\]
Note that we assume the outward normal vector $\+n=(0,-1,0,..,0)$
and $\+u^w\cdot\+n=0.$ So $\+\xi^*=(\xi_1,-\xi_2,\xi_3,..,\xi_D)$ and 
we test the kinetic boundary condition (\ref{eq:Maxwell}) with
$\xi_2\phi_{\+\alpha}(\+\xi)$ to have
\begin{eqnarray*}
	\ang{\xi_2\phi_{\+\alpha}f} &=& \ang{I_{\xi_2>0}
	\xi_2\phi_{\+\alpha}f} + \chi\ang{I_{\xi_2<0}
	\xi_2\phi_{\+\alpha}\mathcal{M}^w} + (1-\chi)\ang
	{I_{\xi_2<0}\xi_2\phi_{\+\alpha}f(\+\xi^*)},
\end{eqnarray*}
where $I_{\xi_2<0}=I_{\xi_2<0}(\+\xi)$ equals one when $\xi_2<0$
and otherwise zero. Substituting the ansatz into the above formula
and noting that $\mathcal{M}^w(\+\xi)=\mathcal{M}^w(\+\xi^*),\ 
\phi_{\+\alpha}(\+\xi)=(-1)^{\alpha_2}\phi_{\+\alpha}(\+\xi^*)$,
we collect the formula into an even-odd parity form:
\[
	\left(1-\frac{\chi}{2}\right)\sum_{\+\beta\in\mathbb{I}_o}\ang
	{\xi_2\omega\phi_{\+\alpha}\phi_{\+\beta}} f_{\+\beta}
	= -\frac{\chi}{2}\sum_{\+\beta\in\mathbb{I}_e}\ang{|\xi_2|\omega
	\phi_{\+\alpha}\phi_{\+\beta}}(f_{\+\beta}-m_{\+\beta}),
	\quad \+\alpha\in\mathbb{I}_e,
\]
where we use the fact that 
$\ang{\xi_2\omega\phi_{\+\alpha}\phi_{\+\beta}}=0$
when $\+\alpha,\+\beta \in\mathbb{I}_e$.
According to the Knudsen layer ansatz, we attach the subscripts
$B$ and $K$ to discriminate the bulk solution and the Knudsen layer
solution. Then we write
\[
	f_{\+\alpha}=f_{\+\alpha,B} + \varepsilon f_{\+\alpha,K},	
\]
where $\varepsilon$ is a small quantity representing the Knudsen
number. Expand all the variables formally into series on
$\varepsilon$, and we assume 
\begin{eqnarray*}
	f_{\+\alpha,B}&=&f_{\+\alpha,B}^{(0)} + \varepsilon 
	f_{\+\alpha,B}^{(1)} + o(\varepsilon f_{\+\alpha}),\\
	f_{\+\alpha,K}&=&f_{\+\alpha,K}^{(0)} + o(f_{\+\alpha}),\\
	m_{\+\alpha}&=&m_{\+\alpha}^{(0)} + \varepsilon 
	m_{\+\alpha}^{(1)} + o(\varepsilon f_{\+\alpha}),
\end{eqnarray*}
where $f_{\+\alpha,B}^{(j)},\ f_{\+\alpha,K}^{(j)}$ and
$m_{\+\alpha}^{(j)}$ have the same magnitude for $j\in\bbN,\
j\geq0.$ When $\chi$ itself is not a small quantity, we can match
the order of $\varepsilon$ in the boundary condition and get
\[
	\left(1-\frac{\chi}{2}\right)\sum_{\+\beta\in\mathbb{I}_o}\ang
	{\xi_2\omega\phi_{\+\alpha}\phi_{\+\beta}} (f_{\+\beta,B}^{(1)}
	+f_{\+\beta,K}^{(0)})
	= -\frac{\chi}{2}\sum_{\+\beta\in\mathbb{I}_e}\ang{|\xi_2|\omega
	\phi_{\+\alpha}\phi_{\+\beta}}(f_{\+\beta,B}^{(1)}
	-m_{\+\beta}^{(1)}+f_{\+\beta,K}^{(0)}).
\]
According to the derivation of Knudsen layer equations,
$f_{\+\beta,K}^{(0)}$ would relate to $\+w$ in
(\ref{eq:KL_p2}), and $m_{\+\beta}$ is calculated in 
Appendix \ref{app:E}. Hence, we get boundary conditions in the 
form of (\ref{eq:wbc1}).

\section{Explicit expressions of (\ref{eq:defS})}
\label{app:B}
We first introduce two differential relations of the one-dimensional
Hermite polynomials (cf. \cite{Fan_new}):
\[
	\pd{}{\xi}\phi_{\alpha+1}(\xi) =
	\sqrt{\alpha+1}\phi_{\alpha}(\xi),\qquad
	\pd{}{\xi}(\omega \phi_{\alpha}) =
	-\sqrt{\alpha+1}\omega\phi_{\alpha+1},
\]
where $\alpha\in\bbN$ and $\xi\in\bbR.$ Due to the orthogonality, we 
only need to calculate
\begin{eqnarray}
	S(\alpha,\beta) &:=& -\sqrt{2\pi}\int_{-\infty}^0\!\!
	\xi\phi_{\alpha}\phi_{\beta}\omega\, \mathrm{d}\xi,\quad
	\alpha,\beta \text{\ are even numbers.} \\ \notag
	&=& -\sqrt{2\pi}\int_{-\infty}^0\!\!
	\left(\sqrt{\alpha}\phi_{\alpha-1}+\sqrt{\alpha+1}
	\phi_{\alpha+1}\right)\phi_{\beta}\omega\, \mathrm{d}\xi.
\end{eqnarray}
Denote by 
\begin{eqnarray*}
I(\alpha,\beta) := \sqrt{2\pi}\int_{-\infty}^0\!\!
	\phi_{\alpha} \phi_{\beta} \omega \mathrm{d}\xi.
\end{eqnarray*}
Integrate by parts using 
$\mathrm{d}\left(\omega\phi_{\beta}\right)=
- \sqrt{\beta+1}\omega\phi_{\beta+1} \mathrm{d}\xi$
or $\mathrm{d}\left(\omega\phi_{\alpha}\right)=
- \sqrt{\alpha+1}\omega\phi_{\alpha+1} \mathrm{d}\xi$,
and we should have two equivalent results:
	\begin{eqnarray*} 
	I(\alpha+1,\beta+1) &=& 
		\left(-\sqrt{2\pi}\phi_{\alpha+1}(0) \phi_{\beta}(0)\omega(0)
		+ \sqrt{\alpha+1}I(\alpha,\beta)\right)/\sqrt{\beta+1} \\ &=& 
	\left(-\sqrt{2\pi}\phi_{\beta+1}(0) \phi_{\alpha}(0)\omega(0)
		+ \sqrt{\beta+1}I(\alpha,\beta)\right)/\sqrt{\alpha+1}.
	\end{eqnarray*}
Noting that $\omega(0)=(2\pi)^{-1/2}$, we denote by
$z_{\alpha}=\phi_{\alpha}(0)$ and have
\begin{eqnarray}
	I(\alpha,\beta) = \left\{\begin{aligned}&\frac{1}{\alpha-\beta}
		\left(\sqrt{\alpha+1}z_{\alpha+1}z_{\beta}
		-\sqrt{\beta+1}z_{\beta+1}z_{\alpha}\right),\quad
		\alpha\neq\beta, \\ &\frac{\sqrt{2\pi}}{2},\quad
	\alpha=\beta,\end{aligned}\right.
\end{eqnarray}
	where $z_0=1,\ z_1=0$ and $z_{n+1}=-\sqrt{n}z_{n-1}/\sqrt{n+1}$.
	Since $z_n=0$ when $n$ is odd, we have
\begin{eqnarray}\notag
	S(\alpha,\beta) &=& -\sqrt{\alpha} I(\alpha-1,\beta) 
	-\sqrt{\alpha+1}I(\alpha+1,\beta)\\
	&=& \label{eq:AAA3}
	\frac{\alpha+\beta+1}{1-(\alpha-\beta)^2}
	z_{\alpha}z_{\beta},\qquad \alpha,\beta \text{\ even}.
\end{eqnarray}

\section{Moments of $\mathcal{M}^w$}
\label{app:E}
We may calculate 
\begin{eqnarray}\label{eq:maJ}
	m_{\boldsymbol{\alpha}} = 
	\int_{\mathbb{R}^3}\frac{\rho^w}{\sqrt{2
	\pi\theta^w}^{3}} \exp\left(-\frac{|\boldsymbol{\xi}- \+u^w|^2}
	{2\theta^w}\right) \phi_{\boldsymbol{\alpha}}
	(\boldsymbol{\xi})\mathrm{d} \boldsymbol{\xi} 
	:= \rho^w \prod_{i=1}^DJ_{\alpha_i}(u_i^w), 
\end{eqnarray}
where $J_{m}(x)$ is a one-dimensional integral defined 
for $m\in \mathbb{N}$ and $x\in \mathbb{R}$ as 
\begin{eqnarray}
J_{m}(x) := \int_{\mathbb{R}}(2\pi\theta^w)^{-\frac{1}{2}}
	\exp\left(-\frac{|\xi-x|^2}{2\theta^w}\right)
\phi_{m}(\xi)\mathrm{d}\xi.
\end{eqnarray}
Integrate by parts using $\mathrm{d}\left(\phi_{m+1}
\right)=\sqrt{m+1}\phi_m \mathrm{d}\xi$, then we have
\begin{eqnarray*}
	J_{m}(x) &=& \frac{1}{\sqrt{m+1}}
\int_{\mathbb{R}}(2\pi\theta^w)^{-\frac{1}{2}}\exp\left(-\frac{|\xi-x|^2}
	{2\theta^w}\right)\phi_{m+1}(\xi)\frac{\xi-x}
	{\theta^w}\mathrm{d}\xi \notag\\
	&=& \frac{1}{\theta^w\sqrt{m+1}}\left(-xJ_{m+1}+\sqrt{m+1} 
	J_m(x)+\sqrt{m+2}J_{m+2}(x)\right),\quad m\geq 0.
\end{eqnarray*} 
This gives the recurrence relation
\begin{equation}\label{eq:C1}
	J_{m}(x) = \frac{1}{\sqrt{m}}\left((\theta^w-1)\sqrt{m-1}
	J_{m-2}(x)+xJ_{m-1}(x)\right),\quad m\geq 2,
\end{equation}
with $J_0(x)=1$ and $J_1(x)=x$ since $\phi_0
(\xi)=1,\ \phi_1(\xi)=\xi$.

If $\mathcal{M}^w$ is linearized around the Maxwellian given
by $\theta_0=1$ and $\+u_0=\+0$, we would have $\theta^w-1=
O(\varepsilon)$ and $u_i^w=O(\varepsilon)$ where $\varepsilon$
is a small quantity. So 
\begin{eqnarray*}
	J_{m}(u_i^w) = o(\varepsilon),\ m>2.
\end{eqnarray*}
Thus, the linearization of $m_{\+\alpha}$ would all be 
higher-order small quantities except the first two-order.

%\end{CJK*}
\end{document}